\theoremstyle{plain} 
\newtheorem*{theo*}{Theorem}
\newtheorem{theo}{Theorem}[section] 
\newtheorem{prop}[theo]{Proposition}
\newtheorem{lem}[theo]{Lemma}
\newtheorem*{con*}{Conjecture}
\theoremstyle{definition}
\newtheorem{defin}[theo]{Definition}
\theoremstyle{remark}
\newtheorem{rem}[theo]{Remark}
\newcommand{\CC}{{\mathbb C}}
\newcommand{\ZZ}{{\mathbb Z}}
\newcommand{\Zgeq}{\ZZ_{\geq 0}}
\begin{document}

\author{Sebastiano Carpi}
\address{Dipartimento di Matematica, Universit\`a di Roma ``Tor Vergata'', Via della Ricerca Scientifica, 1, I--00133 Roma, Italy
\\
E-mail: {\tt carpi@mat.uniroma2.it}}
\author{Luca Tomassini}
\address{Liceo Scientifico Statale ``J.F. Kennedy", Via Nicola Fabrizi, 7,  I--00153 Roma, Italy \\
E-mail: {\tt luca.tomassini@liceokennedy.it}}

\title{Energy bounds for vertex operator algebra extensions}

\date{26 May 2023}

\begin{abstract}
Let $V$ be a simple unitary vertex operator algebra and $U$ be a (polynomially) energy-bounded unitary subalgebra containing the conformal vector of $V$. 
We give two sufficient conditions implying that $V$ is energy-bounded.  The first condition is that  $U$ is a compact orbifold $V^G$ for some compact group $G$ of unitary automorphisms of $V$. The second condition is that $V$ is exponentially energy-bounded and it is a finite direct sum of simple $U$-modules. As consequence of the second condition, we prove that if $U$ is a regular energy-bounded unitary subalgebra of a simple unitary vertex operator $V$,  then $V$ is energy-bounded. In particular, every simple unitary extension (with the same conformal vector) of a simple unitary affine vertex operator algebra associated with a semisimple Lie algebra is  energy-bounded.   
\end{abstract}

\maketitle

\begin{section}{Introduction}
Since more than fifty years energy bounds play an important role in the mathematics of quantum field theory, see, e.g.,  \cite{Buch90,DF77,DSW,GJ87,Nel72}. Roughly speaking, a quantum field theory is (polynomially) energy-bounded if the corresponding smeared field operators can be bounded by some power of the Hamiltonian operator. The typical applications of energy bounds in this context are the self-adjointness of smeared field operators and the connection between the Wightman and the Haag-Kastler approaches to quantum field theory. 

In more recent years energy bounds appeared naturally in the study of chiral (two-dimensional) conformal field theories 
(see, e.g., \cite{BS90,Tol97,Was98}) and in the representation theory of the infinite-dimensional Lie groups describing the symmetries of these theories (see, e.g.,\cite{CDVIT22,GW84,GW85,Tol99}). In these cases the role of the Hamiltonian 
operator is typically played by the {\it conformal Hamiltonian} $L_0$. Applications of the energy bounds for chiral conformal field theories include quantum energy inequalities \cite{FH05}, uniqueness of conformal symmetry \cite{CW05}, construction of spectral triples \cite{CHKL10,CHKLX,CHL}, approximation of conformal field theories \cite{KS17,ZW18}, modular flow, relative entropy and quantum null energy conditions \cite{Hol20,Hol21,Pan20,Pan21}, functional analytic properties of Segal's conformal field theories \cite{Ten19a,Ten19b}, and construction of conformal nets on two-dimensional space-times \cite{AGT23,JT23}. Moreover, the energy bonds are the starting point to prove the more sophisticated {\it local energy bounds} \cite{CTW22,CW}.

A systematic use of  the energy bounds for unitary vertex operator algebras was started in \cite{CKLW18} in order to investigate their connections with conformal nets from a general point of view. Subsequently, the study of the relations between vertex operator algebras and conformal nets has been considerably developed in order to cover the representation theory aspects, see, e.g.,  \cite{CWX,Gui19a,Gui20,Gui21a,Ten19a,Ten19b,Ten19c}. The energy bounds also play an essential role for these representation theory aspects.

Although energy bounds for vertex operator algebras appear difficult to prove in general, many important examples of 
unitary vertex operator algebras have been shown to be energy-bounded. Among them we mention: unitary Virasoro vertex operator algebras, unitary affine vertex operator algebras, unitary lattice vertex operator algebras, the moonshine vertex operator algebra, the even shorter moonshine vertex operator algebras and their unitary subalgebras including orbifolds and cosets  \cite{CKLW18,Gui20,Gui21a}. It has been conjectured that every unitary simple vertex operator algebra is energy-bounded \cite[Conjecture 8.18]{CKLW18} and this conjecture appears to be presently open.
A weak version of the energy bounds (the {\it uniformly bounded order property}) has been recently proved to hold for all unitary vertex operator algebras (and many other unitary vertex algebras) in \cite{RTT22}. Although the uniformly bounded order property can be used to replace the existence of energy bounds for various purposes,  ordinary energy bounds still appear to be important, if not essential, in various situations and it is presently not clear how to derive the stronger version of the bounds from the weaker one. 

The easiest examples of energy-bounded unitary vertex operator algebras, such as the affine and Virasoro unitary vertex operator algebras follow in a rather direct way  from the Lie algebra relations  of the generating fields, cf.\  Section \ref{section_bounds}.  Many other examples come by considering unitary subalgebras. What about vertex operator algebra extensions such as simple current or exotic (cf.\ \cite{Gan23}) extensions? Some of them can be can be shown to be energy-bounded 
by proving energy bounds for sufficiently many intertwining   operators of the smaller vertex operator algebra. This method has been developed by Gui and used to prove, among other things, that the extensions of unitary affine vertex operator algebras of $ADE$ Lie type and the lattice vertex operator algebras (viewed as extensions of Heisenberg vertex operator algebras) are energy-bounded, see  \cite{Gui20,Gui21a}. Unfortunately, the proof of the energy bounds for the intertwining operators can be in general very hard. For example, it is presently unknown if there are enough intertwining operators satisfying energy bounds in order to prove that the extensions of the affine vertex operator algebras are energy-bounded for all Lie types besides ADE.

The main results of this paper are certain conditions implying that a simple unitary vertex operator algebra $V$ is energy-bounded if it contains a 
``sufficiently large'' energy-bounded unitary subalgebra $U$. For example we prove that if a unitary compact orbifold $V^G$ is energy-bounded, then $V$ is energy-bounded  (Theorem \ref{VGtheorem}). Moreover, we prove that if a regular unitary subalgebra $U$ of $V$ with the same conformal vector of $V$ is generated by vectors with conformal energy 1, Virasoro vectors and $\mathcal{W}_3$ vectors, then $V$ is energy-bounded 
(Theorem \ref{regularextension_theorem2}). This gives many new examples of energy-bounded unitary vertex operator algebras. In particular, 
every simple vertex operator algebra extension (with the same conformal vector) of a rational, simple unitary affine vertex operator algebra of {\it any} Lie type is unitary and energy-bounded (Theorem \ref{theoaffineextension} ).

\end{section}

\begin{section}{Unitary vertex operator algebras}
\label{section_unitary}

In this section, we briefly discuss some preliminaries about unitary vertex operator algebras and their unitary subalgebras. 
We mainly follow the terminology and notations  in \cite{CKLW18}, see also \cite{DL14}. For the general theory of vertex (operator) algebras needed in this paper, we refer the reader to \cite{FHL,Kac98,LL04}.   In this paper, every vertex operator algebra and in fact every vector space will be over the field $\CC$ of complex numbers.

Let now $V$ be a vertex operator algebra. We denote by $\Omega$ and $\nu$ the vacuum vector and the conformal vector of $V$ 
respectively. Moreover, we denote by $1_V$ the identity in $\operatorname{End(V)}$ (more generally, for any vector space $X$ we denote by $1_X$ the identity in $\operatorname{End(X)}$ ). For every $a \in V$ the vertex operator
\begin{equation}
Y(a,z) = \sum_{n \in \ZZ}a_{(n)}z^{-n-1}
\end{equation} 
is a (quantum) field on $V$, i.e.\ a formal Laurent series with coefficients in $\operatorname{End}(V)$ such that, for every $b\in V$,  $a_{(n)}b=0$ eventually for $n\to +\infty$. Moreover, for every $a \in V$ we have $a_{(n)}\Omega = 0$ for all $n \geq 0 $  and    $a_{(-1)}\Omega = a$.

The vertex operator $Y(\nu,z)$ corresponding to the conformal vector $\nu$ is called the {\it energy-momentum field} of $V$ and is often written as 
\begin{equation} 
Y(\nu,z)= \sum_{n \in \ZZ} L_n z^{-n-2} 
\end{equation}
so that $L_n=\nu_{(n +1)}$. The endomorphisms $L_n$, $n\in \ZZ$, give a representation of the Virasoro algebra on $V$ with  
central charge $c\in \CC$ (the central charge of $V$) i.e.\

\begin{equation}
\label{Virasoro_Equation}
{[} L_n,L_m{]} = (n-m) L_{n+m} +\frac{c}{12}(n^3-n)\delta_{n,-m} 1_V
\end{equation}
for all $n,m\in\ZZ$. Moreover, $[L_{-1},a_{(n)}] = -n a_{(n-1)}$ for all $a \in V$ and all $n\in \ZZ$. 
Equivalently, 
\begin{equation}
[L_{-1},Y(a,z)] = \frac{d}{dz} Y(a,z)
\end{equation}
for all $a \in V$.  

The operator $L_0$ is diagonalizable on $V$ with integer eigenvalues. Accordingly, if we set $V_n := \mathrm{Ker}(L_0 - n1_V)$, $n \in \ZZ$, we have 
\begin{equation}
V = \bigoplus_{n \in \ZZ} V_n 
\end{equation}  
so that $V$ has a natural $\ZZ$-grading.

In various occasions we will make direct use of the Borcherds identity  \cite{Bor86},
 i.e.\! the equality
\begin{eqnarray}
\label{B-id}
\nonumber
\sum_{j=0}^{+\infty}
\binom{m}{j}
\left(a_{(n+j)}b\right)_{(m+k-j)}c =
\sum_{j=0}^{+\infty}(-1)^j
\binom{n}{j}
a_{(m+n-j)}b_{(k+j)}c \\ -
\sum_{j=0}^{+\infty}(-1)^{j+n}
\binom{n}{j}
b_{(n+k-j)}a_{(m+j)}c \, ,\;\;\;\;
\;a,b,c\in V, \,m,n,k\in \ZZ \, .
\end{eqnarray}
This can be taken as one of the axioms of vertex operator algbras or as a consequence of other axioms including, e.g., locality, see \cite[Sect.\! 4.8]{Kac98}. 

Another useful property of vertex operator algebras is {\it skewsymmetry} 
\begin{equation}
\label{skewsymmetry_equation}
a_{(n)}b = - \sum_{j=0}^{+\infty} \frac{(-1)^{j+n}}{j!}(L_{-1})^j b_{(n+j)}a  
\end{equation}
for all $a,b \in V$ and all $n \in \ZZ$, see \cite[Section 4.2]{Kac98}.

The endomorphisms $a_n \in \mathrm{End}(V)$, $a\in V$, $n\in \ZZ$, are defined by 
\begin{equation}
Y(z^{L_0}a , z) = \sum_{n \in \ZZ} a_{n}z^{-n}.
\end{equation}

If $L_0 a = da$  (equivalently $a \in V_d$), we say that $V$ is homogeneous of conformal dimension (or conformal weight) $d$.  In this case 
we have $a_n = a_{(n + d -1)}$ and $a_{(n)} = a_{n+1-d}$ for all $n\in \ZZ$. In particular, $a_{-d} = a_{(-1)}$ and consequently $a = a_{-d}\Omega$.
The Borcherds identity gives

\begin{equation}
\label{[L_0,a_n]}
[L_0, a_n]=-na_n 
\end{equation}
\begin{eqnarray}
\label{l_-1commutation}
& [L_{-1}, a_n] = (-n -d +1) a_{n-1}  \\
\label{l_1commutation}
& [L_{1},a_n]  =  -(n -d +1) a_{n+1} + (L_1a)_{n+1}  
\end{eqnarray}
for all homogeneous $a\in V_d$ and all $n\in \ZZ$.  

An important consequence of Equation (\ref{[L_0,a_n]}) is that the action of the endomorphisms $a_m$ on $V$ is well-behaved with respect to the $\ZZ$-grading,  namely
\begin{equation}
a_m V_n \subset V_{n-m}
\end{equation}
for all $a \in V$ and all $m,n \in \ZZ$. In particular $a_{-n}\Omega \in V_n$ for all $n \in \ZZ$.

A homogeneous vector $a \in V$ and the corresponding field $Y(a,z)$ 
are called {\it quasi-primary} if $L_1a=0$ and {\it primary} 
if $L_na=0$ for every integer $n>0$. Since 
$L_n\Omega = \nu_{(n+1)}\Omega =0$ for every integer $n\geq -1$, 
the vacuum vector $\Omega$ is a primary vector in $V_0$.
Moreover, it follows by the Virasoro algebra relations that 
the conformal vector $\nu$ is a quasi-primary vector in $V_2$.

We have the following commutation relations:
\begin{equation}
\label{EqQuasi-Primary/PrimaryCommutation}
[L_{m}, a_n] = \left( (d -1)m- n \right) a_{m+n},  
\end{equation}
for all primary (resp. quasi-primary) $a \in V_d$, for all $n\in \ZZ$ and all $m \in \ZZ$ (resp. $m\in \{-1,0,1\}$), see, e.g., \cite[Cor.4.10]{Kac98}.

If $a, b \in V$ and $a$ is homogeneous of conformal dimension $d_a$, then it follows directly from the Borcherds  identity in Equation (\ref{B-id}) that

\begin{eqnarray}
\label{B-id_hom}
\nonumber
\sum_{j=0}^{+\infty} \binom{m}{j}(a_{n+j}b)_{m+k}  = 
\sum_{j=0}^{+\infty}(-1)^j
\binom{n +d_a -1}{j}  
a_{m+n-j}b_{k -n +j}  \\
+ \sum_{j=0}^{+\infty}(-1)^{j + n+d_a}
\binom{n +d_a -1}{j}   b_{k - j + d_a -1}a_{m+ j + 1 - d_a} \, ,
\;\;\;\;
\,m,n,k\in \ZZ \, .
\end{eqnarray}
By taking $n=1-d_a$, $k= q +1-d_a$  and $m=p+d_a -1$   in Equation (\ref{B-id_hom}) we obtain the Borcherds commutator formula

\begin{equation}
\label{commutator_formula_equation} 
[a_{p}, b_{q}] = \sum_{j=0}^{+\infty} \binom{p+d_a -1}{j} (a_{j+1-d_a}b)_{p+q}\, , \; p,q \in \ZZ \, .
\end{equation}
Moreover, by taking $m=0$ in Equation (\ref{B-id_hom}) we get 
\begin{equation}
\label{B-id_hom2}
(a_nb)_k=
\sum_{j=0}^{+\infty}(-1)^j
\binom{n +d_a -1}{j}\left( 
a_{n-j }b_{k -n +j} + (-1)^{n +d_a }b_{k-j + d_a -1}a_{j + 1 - d_a}\right),
\end{equation}
for all $n,k  \in \ZZ$.
\medskip

A homogeneous vector $a \in V$ is called a {\it Virasoro vector} (with central charge $\tilde{c} \in \CC$) if  
\begin{equation}
Y(a,z) = \sum_{n \in \ZZ} \tilde{L}_n z^{-n -2}
\end{equation} 
and the endomorphisms  $\tilde{L}_n$, $n \in \ZZ$, give a representation of the Virasoro algebra on $V$ with central charge $\tilde{c}$. 
Then $a_{(n+1)}=\tilde{L}_n$ for all $n\in \ZZ$ so that $a =  \tilde{L}_{-2}\Omega  \in V_2 $ and $\tilde{L}_n\Omega = 0$ for all $n \geq -1$. The conformal vector is a quasi-primary Virasoro vector and, if  
$V_1 = \{0\}$, then every Virasoro vector is quasi-primary. However, a Virasoro vector needs not to be in general quasi-primary. 
If $a$ is a Virasoro vector, we say that $Y(a,z)$ is a {\it Virasoro field}. 
\medskip

Let $X \subset V$ be a (vector) subspace. We say that $X$ is a  {\it $\ZZ$-graded subspace} of $V$ if 
$X = \bigoplus_{n \in \ZZ}X_n$, where $X_n := X \cap V_n$, $n \in \ZZ.$ Equivalently, $X$ is $\ZZ$-graded if 
$L_0 X \subset X$.
\smallskip 

 A {\it vertex subalgebra} of $V$ is a subspace $U \subset V$ such that $\Omega \in U$ and $a_{(n)}b \in U$ for all $a, b \in U$ and all 
$n \in \ZZ$. A vertex subalgebra of $V$ is always a vertex algebra but in general not a vertex operator algebra. 
If $U$ is a $\ZZ$-graded subalgebra of  $V$ and it has a structure of vertex operator algebra which is compatible with the 
 $\ZZ$-grading inherited from $V$, then we say that $V$ is a {\it vertex operator algebra extension} of $U$. 
If $\nu \in U$ then  $U$ is  a  $\ZZ$-graded subalgebra of $V$ and it is also a vertex operator algebra. In this case we  say that $V$ is a vertex operator algebra extension of $U$ {\it with the same conformal vector}. 

Given a subset $\mathscr{F} \subset V$,  we set $U_\mathscr{F}$ to be the intersection 
of all vertex subalgebras of $V$ containing $\mathscr{F}$. Then $U_{\mathscr{F}}$ is the smallest vertex subalgebra of $V$ containing ${\mathscr{F}}$ and we call it the {\it vertex subalgebra generated by ${\mathscr{F}}$}. If $U_{\mathscr{F}} = V$ we say that {\it $V$ is generated by $\mathscr{F}$}.  If all vectors in $\mathscr{F}$ are homogeneous, then $U_{\mathscr{F}}$ is a $\ZZ$-graded vertex subalgebra of $V$.

\medskip

We now come to unitary vertex operator algebras, cf.\ \cite[Chapter 5]{CKLW18}. In order to simplify the discussion we will assume that $V$ is of {\it CFT-type} i.e.\ that $V_0 = \CC \Omega$ and that $V_n =\{0 \}$ for $n < 0$. Actually, the second condition follows from the first, see \cite[Remark 4.5]{CKLW18}.  Note that by 
\cite[Proposition 5.3]{CKLW18} a unitary vertex operator algebra is of CFT-type if and only if it is simple.

Let $(\cdot | \cdot ) : V \times V \to \CC$ be a scalar product on $V$ (i.e.\ a positive definite Hermitian form on $V$). We say that $(\cdot | \cdot )$ is {\it normalized}  if 
$(\Omega| \Omega ) =1$.  Moreover, we say that $(\cdot | \cdot )$ is {\it invariant} if there exists an anti-linear involution 
$V \ni a \mapsto a^* \in V$ such that $\nu^* = \nu$ and
\begin{equation} \label{invariant_scalar_product}
(b|a_n c) =  (a^*_{-n}b |c)
\end{equation}
for all $a,b,c \in V$ and all $n \in \ZZ$.  As a consequence, we also have that $\Omega^* = \Omega$.  

If $V$ is equipped with an invariant and normalized scalar product $(\cdot | \cdot)$, we say that $V$ is a {\it unitary vertex operator algebra}.  If this is the case then, by \cite[Theorem 5.16]{CKLW18}, there exists a necessarily unique vertex operator algebra anti-linear automorphism 
$\theta: V \to V$ (the {\it PCT-operator}) such that 
\begin{equation}
a^* := e^{L_1}(-1)^{L_0}\theta(a) 
\end{equation} 
for all $a \in V$. It turns out that $\theta ^2 = 1_V$, cf. \cite[Proposition 5.1]{CKLW18}. If $V$ is unitary, then the representation of the Virasoro algebra associated with the conformal vector is unitary, \cite{CKLW18,KaRa}. In particular, the central charge $c$ of $V$ is a nonnegative real number.

The unitary structure is not necessarily unique but is unique up to 
unitary vertex operator algebra automorphisms \cite[Proposition 5.19]{CKLW18}. More precisely, if $\{\cdot | \cdot \}$ and $(\cdot | \cdot )$ are two 
normalized invariant scalar products on  $V$, then there is a vertex operator algebra automorphism $g: V \to V$ such that 
$\{\cdot | \cdot \} = (g\cdot | g \,\cdot )$. Conversely, if $(\cdot | \cdot )$  is a normalized invariant scalar product on $V$ 
and $g: V \to V$ is a vertex operator algebra automorphism, then $(g \cdot | g \cdot )$ is a normalized invariant scalar product with anti-linear involution given by $a \mapsto a^{*_g} := g^{-1}(ga)^*$, $a \in V$ and PCT-operator $\theta_g:=g^{-1}\theta g$. 
\medskip

We now discuss unitary subalgebras, cf.\ \cite[Section 5.4]{CKLW18} and \cite{CGH19}. Let $V$ be a simple unitary vertex operator algebra. We say that $a \in V$ is a
{\it Hermitian vector} if $a=a^*$. If $a \in V$ is Hermitian we also say that $Y(a,z)$ is a {\it Hermitian field}. If $a$ is a quasi-primary vector of conformal dimension $d$, then $a^* = (-1)^d\theta a$ so that $a$ is Hermitian if and only if $a= (-1)^d\theta a$.  More generally, we say that a subset 
$\mathscr{F} \subset V$ is {\it Hermitian} if 
$$\mathscr{F}^* = \{a \in V: a^* \in \mathscr{F} \} \subset \mathscr{F}$$ 
(equivalently $\mathscr{F}^* = \mathscr{F}$ ). 
A {\it unitary vertex subalgebra} (or simply a {\it unitary subalgebra}) $U$ of $V$ is a Hermitian $\ZZ$-graded vertex subalgebra of $V$.  
By \cite[Proposition 5.23]{CKLW18} a vertex subalgebra $U \subset V$ is unitary if and only if $L_1U \subset U$ and $\theta U \subset U$. If 
$\mathscr{F} \subset V$ is a  family of Hermitian quasi-primary vectors, then $U_{\mathscr{F}}$ is a unitary subalgebra of $V$. 
\medskip 

If $U \subset V$ is a unitary subalgebra then the coset subalgebra 
\begin{equation}
U^c:= \{b \in V: a \in U \Rightarrow [Y(b,z),Y(a,w)] = 0 \} 
\end{equation}
is also a unitary subalgebra, see \cite[Example 5.27]{CKLW18}. Moreover, there exists a unique $\ZZ$-grading preserving vector space endomorphism $e_U:  V \to V$ such that $e_U^2 =e_U$, $e_UV=U$ and $(e_Ub|a)=(b|e_Ua)$ for all $a,b \in V$ (the orthogonal projection of $V$ onto $U$). Then,  by 
\cite[Proposition 5.29]{CKLW18}, $\nu^U := e_U \nu$ is a Hermitian quasi-primary Virasoro vector in $U$ with nonnegative central charge $c_U \leq c$.  In particular,  $U$ is a simple unitary vertex operator algebra with $\ZZ$-grading inherited from $V$ and we say that $V$ is a {\it unitary vertex operator algebra extension} of $U$.  Moreover, $\nu = \nu^U + \nu^{U^c}$ and, if we set $L^U_n := \nu^U_n$  and $L^{U^c}_n := \nu^{U^c}_n$, $n \in \ZZ$, then $L_0, L^{U}_0$ and  $L^{U^c}_0$ are simultaneously diagonalizable in $V$ with nonnegative eigenvalues,  
$L_0 = L^{U}_0 +L^{U^c}_0$ and $c=c_U + c_{U^c}$. It follows that $(b|(L^U_0 +1_V)^s b) \leq (b|( L_0 +1_V)^s b)$ for all $b\in V$ and all real numbers $s\geq 0$.  Furthermore, $\nu^U= \nu$ if and only if $c=c_U$. In this case, we say that  $V$ is a  unitary vertex operator algebra extension of $U$ {\it with the same conformal vector}, cf.\ \cite{CGGH,Gui21b}.

\end{section}

\begin{section}{Energy bounds} 
\label{section_bounds}

In this section we discuss the notion of energy bounds for unitary vertex operator algebras, cf.\ \cite[Chapter 6]{CKLW18}. 

Let $V$ be a simple unitary vertex operator algebra. Then $V$ is a normed space with norm defined by $\|a\| := \sqrt{(a|a)}$.
 Let $X \subset V$ be a subspace of $V$, let $\mathcal{X}$ be a normed space and let $R: X \to \mathcal{X}$ be a linear map. 
We define the extended real valued norm $\|R\| \in [0,+\infty]$ of $R$ by 
\begin{equation}
\| R \| := \sup_{\{a\in X: \, \|a\| \leq 1\}} \|Ra \| .
\end{equation} 
Clearly, $\|R\|<+\infty$ whenever $X$ is finite dimensional. Note also that $\|R a \| \leq \|R\|  \|a\|$ for all $a\in V$ and that if $X^1, X^2$ are subspaces of 
$V$ and $R_2: X^2 \to X^1$ and  $R_1 : X^1 \to \mathcal{X}$ then $\| R_1 R_2\| \leq \|R_1\| \|R_2\|$.    

There is a natural filtration on $V$ given by the increasing sequence of  subspaces $V_{\leq n} \subset V$, $n \in \ZZ$, defined by
\begin{equation}
V_{\leq n} = \bigoplus_{k \leq n}V_k.  
\end{equation} 
Then $V_{\leq n} = \{ 0\}$ whenever $n <0$ (because $V$ is of CFT-type) and 
$$
V = \bigcup_{n \in \ZZ}V_{\leq n}\,. 
$$
Moreover, $a_m V_{\leq n} \subset V_{\leq n-m}$ for all $a \in V$ and all 
$m,n \in \ZZ$.
\smallskip

For $R \in \mathrm{End}(V)$ and $n \in \ZZ$, we set 
\begin{equation}
\|R \|_n := \| R \restriction_{V_{\leq n}} \| 
\end{equation}
so that $\|R \|_n =0$ for $n<0$, $n\mapsto \| R\|_n$ is an increasing sequence and   
\begin{equation}
\|R \| = \lim_{n \to +\infty} \|R\|_n =   \sup_{n \in \Zgeq} \|R\|_n =  \sup_{n \in \ZZ} \|R\|_n \,.
\end{equation}
If there is a $R^\dagger \in \mathrm{End}(V)$ such that $(a|Rb) = (R^\dagger a | b)$ for all $a,b \in V$, then we have the
inequality $\|R \|_n^2  \leq  \|R^\dagger R\|_n$ for all $n \in \ZZ$.  Moreover, if for a given $n\in \ZZ$ it holds 
$R^\dagger R V_{\leq n} \subset V_{\leq n}$, then we also have the {\it $C^*$-identity}  
$\|R \|_n^2  =  \|R^\dagger R\|_n$.  In particular, since $(a_m)^\dagger = a^*_{-m}$ we have 
\begin{equation}
\label{equation_a*estimate1}
\| a^*_{-m}a_m\|_n = \|a_m \|_n^2
\end{equation}
for all $a\in V$ and all $m, n \in \ZZ$. As a consequence, 
\begin{equation}
\label{equation_a*estimate2}
\|a_{m} \|_n = \|a^*_{-m}\|_{n-m}
\end{equation}
for all $a\in V$ and all $m,n \in \ZZ$.

\begin{defin} (\cite[Chapter 6]{CKLW18} and \cite[Section 4.4]{Gui21a}) Let $V$ be a simple unitary vertex operator algebra and let $s$ be a nonnegative real number. We say that a vector $a \in V$ {\it satisfies $s$-th order (polynomial)  energy bounds} if there exist real numbers $t\geq 0$, and $C \geq 0$ such that $\|a_m\|_n \leq C (1+ |m|)^t (1 + |n|)^s$ for all $m,n \in \ZZ$ (equivalently $\| a_m (L_0+1_V)^{-s}\| \leq C (1+ |m|)^t $  for all $m \in \ZZ$). If $a$ satisfies $1$-st order energy bounds we also say that $a$ {\it satisfies linear energy bounds}. We say that $a \in V$ {\it satisfies energy bounds} if it satisfies $s$-th order energy bounds for some real number $s\geq 0$.      
\end{defin}

\begin{rem} 
\label{smeared_remark}
Let $\mathcal{H}_V$ be the Hilbert space completion of $V$ and let $a \in V$ satisfy $s$-th order energy bounds. For a smooth complex valued function 
$f \in C^\infty(S^1)$ with Fourier coefficients  $\hat{f}_n := \frac{1}{2\pi} \int_{-\pi}^{\pi} f(e^{i \vartheta}) e^{-in\vartheta} d\vartheta$ the series 
$\sum_{n \in \ZZ} \hat{f}_n a_n b$ converges in $\mathcal{H}_V$ for all $b \in  V$. Accordingly, one can define a linear map $Y_0(a,f): V \to \mathcal{H}_V$ (the {\it smeared vertex operator}). Moreover,  given a real number $s \geq 0$, then the real numbers $t \geq 0$, and $C \geq 0$ are such that $\|a_m\|_n \leq C (1+ |m|)^t(1 + |n|)^s$  for all $a\in V$ and all $m, n \in \ZZ$ 
if and only if $\|Y_0(a,f) (L_0 + 1_V)^{-s} \| \leq C\|f\|_t $, where $\| f \|_t = \sum_{m \in \ZZ}(1 + |m|)^t |\hat{f}_m|$.  The smeared vertex operators play a fundamental role in the connection between unitary vertex operator algebras and conformal nets, see \cite{CKLW18}. 
\end{rem}

\begin{rem}
\label{bounded_remark}
Let $V $ be a simple unitary vertex operator algebra. Then it follows from \cite[Theorem 1]{Bau97} that $a \in V$ satisfies zeroth order energy bounds if and only if $a \in \CC\Omega$.
\end{rem}

Well known examples of  elements of $V$ satisfying energy bounds are the Hermitian quasi-primary Virasoro vectors and vectors in $V_1$, c.f.\ 
\cite[Proposition 6.3]{CKLW18}. Here we give some more details. Let us start with the Virasoro vectors. Let  $\tilde{\nu}\in V$ be a quasi-primary Hermitian Virasoro vector. Moreover, let
\begin{equation}
Y(\tilde{\nu},z) =\sum_{n \in \ZZ} \tilde{L}_n z^{-n-2}
\end{equation} 
be the corresponding Virasoro field with central charge $\tilde{c}$. Let $U_{\{\tilde{\nu}\}}$ be the vertex subalgebra generated by  $\tilde{\nu}$. It is a unitary subalgebra of $V$ with  $U_{\{\tilde{\nu}\}} \cap V_2 =\CC\tilde{\nu}$. It follows that $\tilde{\nu} = \nu^U$, so that  $\tilde{L}_0$ is diagonalizable on $V$ with 
nonnegative eigenvalues, $\|(\tilde{L}_0 +1_V)b\| \leq \|({L}_0 +1_V)b\|$ for all $b \in  V$ and $0 \leq \tilde{c} \leq c$.
Then by \cite[Lemma 4.1]{CW05} there is a positive real number $r(\tilde{c}) \geq 1$, depending on the central charge 
$c$ of $V$ only, such that 
\begin{equation}
\|\tilde{L}_m b\| \leq r(\tilde{c})(1+ |m|)^\frac32 \|(\tilde{L}_0 +1_V)b\| \leq r(\tilde{c})(1+ |m|)^\frac32 \|(L_0 +1_V)b\|    
\end{equation}
for all $b \in V$.  Actually,  it follows from \cite[Equation 2.8]{GW85} that one can take 
$r(\tilde{c}) = 1 + \sqrt{\frac{\tilde{c}}{3}}$ (note that the choice in \cite[Proposition 3.2]{CKLW18} is not correct although the mistake has no consequences for the results in \cite{CKLW18}).
We record this in the following proposition.

\begin{prop}
\label{boundVirProp}
Let $V$ be simple unitary  vertex operator algebra with central charge $c$ and let $a \in V$ be a quasi-primary Hermitian Virasoro vector with central charge $\tilde{c}$. Then  $a$ satisfies linear energy bounds. More precisely
$$\|a_m\|_n \leq (1 + \sqrt{\tilde{c}/3})(1+|m| )^{\frac32}(1 + |n|)$$
for all $m, n \in \ZZ$.
\end{prop}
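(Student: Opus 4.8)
The plan is to deduce the stated explicit bound directly from the two facts that are already assembled in the paragraph preceding the proposition. First I would invoke the observation made just above: since $a$ is a quasi-primary Hermitian Virasoro vector, the vertex subalgebra $U_{\{a\}}$ it generates is a unitary subalgebra of $V$ satisfying $U_{\{a\}}\cap V_2 = \CC a$, hence $a=\nu^{U_{\{a\}}}$. Writing $\tilde L_n := a_n = a_{(n+1)}$ for the associated Virasoro operators, the general properties of $\nu^U$ recalled in Section \ref{section_unitary} give that $\tilde L_0$ is diagonalizable on $V$ with nonnegative eigenvalues and that $\|(\tilde L_0+1_V)b\|\leq\|(L_0+1_V)b\|$ for all $b\in V$, together with $0\leq\tilde c\leq c$.

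Next I would quote the estimate from \cite[Lemma 4.1]{CW05} (with the sharper constant from \cite[Equation 2.8]{GW85}), which says precisely that for a positive-energy unitary representation of the Virasoro algebra with central charge $\tilde c$ one has $\|\tilde L_m b\|\leq(1+\sqrt{\tilde c/3})(1+|m|)^{3/2}\|(\tilde L_0+1_V)b\|$ for all $m\in\ZZ$ and all $b$ in the representation space. Combining this with the comparison $\|(\tilde L_0+1_V)b\|\leq\|(L_0+1_V)b\|$ yields
\begin{equation}
\|a_m b\| = \|\tilde L_m b\| \leq (1+\sqrt{\tilde c/3})(1+|m|)^{\frac32}\|(L_0+1_V)b\|
\end{equation}
for all $b\in V$.

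Finally I would translate this operator-norm inequality into the $\|\cdot\|_n$ language of the definition. Given $b\in V_{\leq n}$ with $\|b\|\leq 1$, each homogeneous component of $b$ lies in some $V_k$ with $k\leq n$, so $\|(L_0+1_V)b\|\leq(1+n)\|b\|\leq 1+|n|$ (using that $V$ is of CFT-type, so the eigenvalues of $L_0$ are nonnegative integers $\leq n$, and $1+k\leq 1+|n|$). Taking the supremum over such $b$ gives $\|a_m\|_n\leq(1+\sqrt{\tilde c/3})(1+|m|)^{3/2}(1+|n|)$, which is the claimed bound; this in particular shows $a$ satisfies first-order, i.e.\ linear, energy bounds with $t=\tfrac32$, $s=1$, $C=1+\sqrt{\tilde c/3}$.

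There is essentially no obstacle here: the proposition is a bookkeeping repackaging of material already developed in the running text, and the only nontrivial input — the $(1+|m|)^{3/2}$ Virasoro estimate — is imported wholesale from \cite{CW05,GW85}. The one point requiring a small amount of care is the passage from the norm bound $\|(\tilde L_0+1_V)b\|\leq\|(L_0+1_V)b\|$, valid because $L_0-\tilde L_0 = L_0^{U^c_{\{a\}}}$ is a nonnegative operator commuting with $\tilde L_0$, to the grading-wise statement $\|(L_0+1_V)b\|\leq(1+|n|)\|b\|$ on $V_{\leq n}$; both are immediate from simultaneous diagonalizability, but they should be cited explicitly from the discussion of $\nu^U$ in Section \ref{section_unitary}.
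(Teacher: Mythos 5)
Your proposal is correct and follows essentially the same route as the paper: the paper's ``proof'' of this proposition is exactly the preceding paragraph, which identifies $a$ with $\nu^{U_{\{a\}}}$ to get $\|(\tilde L_0+1_V)b\|\leq\|(L_0+1_V)b\|$ and then imports the $(1+|m|)^{3/2}$ estimate from \cite[Lemma 4.1]{CW05} with the constant $1+\sqrt{\tilde c/3}$ from \cite[Equation 2.8]{GW85}. Your final translation into the $\|\cdot\|_n$ norms via $\|(L_0+1_V)b\|\leq(1+|n|)\|b\|$ on $V_{\leq n}$ is the only step left implicit in the paper, and you carry it out correctly.
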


Before discussing energy bounds for  vectors in $V_1$, we state and prove a lemma which will play a central role in Section \ref{section_main}.

\begin{lem}
\label{productLemma}
Let $V$ be a simple unitary vertex operator algebra.
If $a \in V_d$  then $\|a_mb\|^2 \leq (b|(a_{-d}a^*)_0 b)$ for all $b \in V$ and all $m\in \Zgeq$. Hence,   
$\|a_m\|_n^2 \leq \| (a_{-d}a^*)_0 \|_n$ for all $m \in \Zgeq$ and all $n \in \ZZ$. 
In particular, $\|a_0\|_n^2 \leq \| (a_{-d}a^*)_0 \|_n$ for all $n \in \ZZ$\,. 
\end{lem}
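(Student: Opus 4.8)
The plan is to estimate $\|a_m b\|^2 = (a_m b \mid a_m b)$ using the invariance of the scalar product, Equation (\ref{invariant_scalar_product}), which gives $(a_m b \mid a_m b) = (b \mid (a^*)_{-m} a_m b)$. So the whole point is to understand the operator $(a^*)_{-m} a_m$ and compare it, for $m \geq 0$, with $(a_{-d} a^*)_0$. Since $a \in V_d$, the adjoint $a^* = e^{L_1}(-1)^{L_0}\theta(a)$ need not be homogeneous, but I only need the identity $(a^*)_{-m} = (a_m)^\dagger$ recorded just before the lemma, together with the fact that $a^*$ has components in conformal weights $\leq d$; in particular the field $Y(a^*,z)$ and its modes are controlled. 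The key algebraic input will be the Borcherds commutator formula, Equation (\ref{commutator_formula_equation}), or rather its homogeneous consequence Equation (\ref{B-id_hom2}) applied to the product of $a^*$ (or its top component) with $a$.

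First I would reduce to the case that $a^*$ is replaced by a homogeneous vector. Write $a^* = \sum_{k} c_k$ with $c_k \in V_k$, $k \leq d$; then $(a^*)_{-m} = \sum_k (c_k)_{-m}$ and one shows, by the grading estimate $c_{-m} V_n \subset V_{n+m}$ (so only the $c_d$ term "returns" $a_m b \in V_{\bullet - m}$ to the same weight as $b$ after pairing), that in the scalar product $(b \mid (a^*)_{-m} a_m b)$ only the top-weight component matters — no, more carefully: all components contribute but the cross terms vanish by orthogonality of the $V_n$'s, since $a_m b \in V_{n-m}$ when $b \in V_n$, and $(c_k)_{-m}$ sends $V_{n-m}$ to $V_{n-m+k}$, which equals $V_n$ only for $k=d$. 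Hence $(b \mid (a^*)_{-m} a_m b) = (b \mid (c_d)_{-m} a_m b)$ where $c_d \in V_d$ is the top component of $a^*$, and crucially $c_d = a^*$-component of weight $d$, which by quasi-primarity considerations one can identify; but even without identifying it, set $\tilde a := c_d \in V_d$ and note $(\tilde a)_{-m} = \tilde a_{(-m + d - 1)}$.

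Next, the main step: for $m \geq 0$ I want $(b \mid \tilde a_{-m} a_m b) = (b \mid (a_{-d}\tilde a)_0 b)$, or at least $\leq$. I would apply Equation (\ref{B-id_hom2}) with the roles chosen so that $(a_{-d}\tilde a)_0$ appears on the left: take the vector $a$ there to be our $a \in V_d$, index $n = -d$ (so $n + d_a - 1 = -1$), and $k = 0$. The left side is $(a_{-d}\tilde a)_0$. The right side becomes a sum over $j \geq 0$ of $\binom{-1}{j}(\cdots) = (-1)^j(\cdots)$ times $a_{-d-j}\tilde a_{j} + (\pm)\tilde a_{j-1}a_{1-d}$-type terms (being careful with the exponent $(-1)^{n+d_a} = (-1)^0 = 1$). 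Pairing against $b$ on both sides and using $\tilde a_{j} b \to 0$ for $j$ large (truncation of the field), together with $(a_{-d-j})^\dagger = (a^*)_{d+j}$ and matching modes, the terms reorganize: the "annihilator" part $\tilde a_j b$ for $j \geq 1$ kills lower-weight pieces, and what survives telescopes to exactly $\|a_m b\|^2$ summed with manifestly nonnegative contributions $\|a_{something} b\|^2$. This positivity is what upgrades an identity for $m$ fixed into the uniform bound $\|a_m b\|^2 \leq (b \mid (a_{-d}a^*)_0 b)$ for all $m \geq 0$: each $\|a_m b\|^2$ is dominated by the same sum of squares.

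The last two assertions are immediate: taking the supremum over $b \in V_{\leq n}$ with $\|b\| \leq 1$ and noting $(a_{-d}a^*)_0$ preserves $V_{\leq n}$ (it has weight $0$ with respect to $L_0$ by Equation (\ref{[L_0,a_n]})), we get $\|a_m\|_n^2 \leq \|(a_{-d}a^*)_0\|_n$ since $(b\mid (a_{-d}a^*)_0 b) \leq \|(a_{-d}a^*)_0\|_n \|b\|^2$ by Cauchy--Schwarz and self-adjointness of $(a_{-d}a^*)_0$ on $V_{\leq n}$ (its adjoint being $(a^{*}_{-d}a)_0$-type, which one checks lands in the same space). The case $m = 0$ is just the special case. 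I expect the genuine obstacle to be the bookkeeping in the Borcherds identity manipulation of the previous paragraph — getting all the binomial signs, the index shifts $a_{(n)} = a_{n+1-d}$, and the adjoint identities to line up so that the leftover terms are visibly $\sum \|(\text{mode})\, b\|^2 \geq 0$ rather than an indefinite expression. A cleaner route to the same end, which I would try first, is to use skewsymmetry (\ref{skewsymmetry_equation}) to relate $a_{-d}a^* = a_{(-1)}a^*$ to $a^*_{(-1)}a = a^* \cdot a$ up to $L_{-1}$-derivative terms (which have vanishing $0$-mode after the $(\cdot)_0$), and then expand $((a^*)_{(-1)}a)_0$ via (\ref{B-id_hom2}) with $n=-1$; this tends to produce the sum-of-squares $\sum_{j \geq 0} \|a_{m} b\|^2$-type expression more transparently.
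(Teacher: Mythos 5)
Your overall strategy is the right one and is essentially the paper's: apply Equation (\ref{B-id_hom2}) with $n=-d$, $k=0$ and second vector $a^*$ to write $(a_{-d}a^*)_0=\sum_{j\geq d}a_{-j}a^*_j+\sum_{j\geq 1-d}a^*_{-j}a_j$, pair against $b$, and use $(a_{-j})^\dagger=a^*_j$ to obtain $(b|(a_{-d}a^*)_0b)=\sum_{j\geq d}\|a^*_jb\|^2+\sum_{j\geq 1-d}\|a_jb\|^2$, a sum of nonnegative terms which dominates each $\|a_mb\|^2$ for $m\in\Zgeq$ when $d>0$. But your preliminary reduction of $a^*$ to its top homogeneous component $\tilde a=c_d$ is both incorrect and harmful. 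It is incorrect because the grading shift of the mode $c_p$ is $-p$ \emph{independently} of the conformal weight of $c$ (Equation (\ref{[L_0,a_n]}) holds in every weight; the weight-dependence is entirely absorbed into $c_p=c_{(p+k-1)}$), so every component $(c_k)_{-m}$ maps $V_{n-m}$ back into $V_n$, no cross term is killed by orthogonality, and $(b|(a^*)_{-m}a_mb)\neq(b|(c_d)_{-m}a_mb)$ in general. It is harmful because the sum-of-squares structure rests precisely on the second factor being $a^*$, the vector implementing the adjoint: with $\tilde a$ in place of $a^*$ the terms $a_{-d-j}\tilde a_{d+j}$ are no longer of the form $T^\dagger T$, the expansion of $(b|(a_{-d}\tilde a)_0b)$ is not manifestly nonnegative, and in any case the lemma bounds $\|a_mb\|^2$ by $(b|(a_{-d}a^*)_0b)$, not by $(b|(a_{-d}\tilde a)_0b)$. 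The fix is simply to delete the reduction: Equation (\ref{B-id_hom2}) requires only the \emph{first} vector to be homogeneous, so it applies verbatim with $b=a^*$ inhomogeneous.

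Two smaller points. The ``cleaner route'' via skewsymmetry relies on the claim that $L_{-1}$-derivative terms have vanishing zero mode; that holds for the residue $(L_{-1}x)_{(0)}=0$ but not for the graded zero mode, since $(L_{-1}x)_0=-w\,x_0$ for $x\in V_w$, so that shortcut is not available. Also, the case $d=0$ needs separate treatment (as in the paper), because then $1-d=1>0$ and the second sum above misses $j=0$, so the inequality does not directly cover $m=0$; of course for $d=0$ one has $a\in\CC\Omega$ and everything is immediate. Your closing deductions ($\|a_m\|_n^2\leq\|(a_{-d}a^*)_0\|_n$, using that $(a_{-d}a^*)_0$ preserves $V_{\leq n}$) are fine.
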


\begin{proof}

By Equation (\ref{B-id_hom2}) we have 

\begin{equation}
(a_{-d}a^*)_0 = \sum_{j=d}^{+\infty}a_{-j}a^*_j + \sum_{j = 1-d}^{+\infty} a^*_{-j}a_j
\end{equation}
and hence
\begin{equation}
(b|(a_{-d}a^*)_0b) = \sum_{j=d}^{+\infty}\|a^*_jb\|^2 + \sum_{j = 1-d}^{+\infty} \|a_jb\|^2 
\end{equation}
for all $b \in V$.  If $d >0$ we find  $\|a_mb\|^2 \leq (b|(a_{-d}a^*)_0 b)$ for all $b \in V$ and all $m\in \Zgeq$ and thus 
$\|a_m\|_n^2 \leq \| (a_{-d}a^*)_0 \|_n$ for all $m \in \Zgeq$ and all $n \in \ZZ$. 
If $d=0$ then $a = (\Omega | a) \Omega$, $a^* = (a|\Omega) \Omega$ and $(a_0a^*)_0 = | (\Omega |a)|^2  1_V$ and hence 
$\|a_m\|_n = 0$ for all $m \in \ZZ \setminus \{0\}$ and all $n \in \ZZ$. If $n<0$ we have $\|a_0\|_n = 0$ again. Finally, if $n\geq 0$ then $\|a_0 \|_n^2 =|(\Omega | a)|^2 = \|(a_0a^*)_0\|_n$.   
\end{proof}

We now discuss energy bounds for vectors in $V_1$. It is well known that $V_1$ is a Lie algebra 
with brackets $[a,b] := a_0b$. This follows directly from skewsymmetry (Equation (\ref{skewsymmetry_equation})) and the Borcherds commutator formula (Equation (\ref{commutator_formula_equation})). Moreover, we have the Kac-Moody algebra relations
\begin{equation}
\label{Kac-Moody_equation}
[a_m,b_n] = [a,b]_{m+n}  + m(a^*|b)\delta_{m,-n} 1_V
\end{equation}
for all $a,b \in V_1$ and all $m,n \in \ZZ$. In particular, we have the Heisenberg algebra relations
\begin{equation}
\label{Heisenberg_equation}
[a_m,a_n] = m(a^*|a)\delta_{m,-n} 1_V
\end{equation}
for all $a \in V_1$ and all $m,n \in \ZZ$.

\begin{prop}
\label{boundV1Prop}
Let $V$ be a simple unitary vertex operator algebra and let $a \in V_1$. Then $a$ satisfies 
$\frac12$-th order energy bounds. 
More precisely
$$
\|a_m\|_n \leq  2^{\frac32} \|a\| (1+|m|)^{\frac12}(1+ |n| )^{\frac12}
$$
for all $m,n \in \ZZ$. As a consequence, $a$ satisfies linear energy bounds.
\end{prop}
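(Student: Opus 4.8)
The plan is to reduce to the case of a Hermitian vector, for which $Y(a,z)$ is a free boson field, and then use the Sugawara construction to produce a Virasoro vector whose zero mode controls $(a_{-1}a)_0$, feeding the latter into Lemma~\ref{productLemma}. For the reduction, write $a=h_1+ih_2$ with $h_1:=\frac12(a+a^*)$ and $h_2:=\frac1{2i}(a-a^*)$; these are Hermitian vectors in $V_1$, and since the PCT-operator is anti-unitary one has $\|a^*\|=\|a\|$ (this can also be read off from invariance of the scalar product and skewsymmetry, which give $a^*_1a=\|a\|^2\Omega=\|a^*\|^2\Omega$), hence $\|h_1\|,\|h_2\|\leq\|a\|$. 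From $a_m=(h_1)_m+i(h_2)_m$ we get $\|a_m\|_n\leq\|(h_1)_m\|_n+\|(h_2)_m\|_n$, so it suffices to prove $\|b_m\|_n\leq\sqrt2\,\|b\|(1+|m|)^{\frac12}(1+|n|)^{\frac12}$ for every Hermitian $b\in V_1$; adding the two contributions then yields the constant $2^{\frac32}\|a\|$.

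So assume $a=a^*\in V_1$, the case $a=0$ being trivial, and put $\kappa:=\|a\|^2=(a|a)>0$. Since $V_1$ is a Lie algebra under $[b,c]:=b_0c$ we have $a_0a=[a,a]=0$; together with $a_1a=\kappa\Omega$ and $a_ja=0$ for $j\geq2$ (for degree reasons), this says that $Y(a,z)$ is a free boson field. By the Sugawara construction, $\omega_a:=\frac1{2\kappa}a_{-1}a\in V_2$ is therefore a Virasoro vector of central charge $1$; it is quasi-primary because $L_1\omega_a=\frac1{2\kappa}(a_{-1}L_1a+a_0a)=0$, and it is Hermitian because the PCT-operator sends $a$ to $-a$, so $\theta\omega_a=\omega_a$ and $\omega_a^*=(-1)^2\theta\omega_a=\omega_a$. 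Arguing exactly as in the discussion of quasi-primary Hermitian Virasoro vectors preceding Proposition~\ref{boundVirProp} (the subalgebra $U_{\{\omega_a\}}$ being a quotient of the universal Virasoro vertex operator algebra of central charge $1$, whence $U_{\{\omega_a\}}\cap V_2=\CC\omega_a$), we conclude that $\omega_a=\nu^{U_{\{\omega_a\}}}$. Hence, writing $L^a_0:=(\omega_a)_0$, the operator $L^a_0$ is diagonalizable on $V$ with nonnegative eigenvalues, preserves each $V_{\leq n}$, and satisfies $0\leq L^a_0\leq L_0$; in particular $\|L^a_0\|_n\leq n$. Since $(a_{-1}a)_0=2\kappa L^a_0$ (take the $0$-mode of $\omega_a=\frac1{2\kappa}a_{-1}a$; cf.\ Equation~(\ref{B-id_hom2})), we obtain $\|(a_{-1}a)_0\|_n\leq 2\kappa n\leq 2\|a\|^2(1+|n|)$.

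Now Lemma~\ref{productLemma}, applied with $d=1$ and $a^*=a$, gives $\|a_m\|_n^2\leq\|(a_{-1}a)_0\|_n\leq 2\|a\|^2(1+|n|)$ for all $m\in\Zgeq$, i.e.\ $\|a_m\|_n\leq\sqrt2\,\|a\|(1+|n|)^{\frac12}\leq\sqrt2\,\|a\|(1+|m|)^{\frac12}(1+|n|)^{\frac12}$ for $m\geq0$. For $m<0$, Equation~(\ref{equation_a*estimate2}) gives $\|a_m\|_n=\|a_{-m}\|_{n-m}$, and the case just proved (with mode $-m\in\Zgeq$ and level $n+|m|$) yields $\|a_m\|_n^2\leq 2\|a\|^2(1+n+|m|)\leq 2\|a\|^2(1+|n|)(1+|m|)$. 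Hence $\|a_m\|_n\leq\sqrt2\,\|a\|(1+|m|)^{\frac12}(1+|n|)^{\frac12}$ for every $m\in\ZZ$ and every Hermitian $a\in V_1$; together with the reduction of the first paragraph this proves the displayed estimate. Finally, since $(1+|n|)^{\frac12}\leq 1+|n|$, the estimate is in particular a first order (linear) energy bound.

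The step I expect to demand the most care is the identification $\omega_a=\nu^{U_{\{\omega_a\}}}$, with the ensuing domination $L^a_0\leq L_0$: it rests on recognizing the free boson (Sugawara) structure generated inside $V$ by a Hermitian vector of $V_1$ and on the structure theory of unitary subalgebras generated by quasi-primary Hermitian Virasoro vectors, exactly as exploited for $\tilde\nu$ in the preceding discussion. Everything else — the Hermitian splitting, the identity $(a_{-1}a)_0=2\kappa L^a_0$, and the bookkeeping that passes from $m\geq0$ to arbitrary $m\in\ZZ$ via Equation~(\ref{equation_a*estimate2}) — is routine.
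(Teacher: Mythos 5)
Your proposal is correct and follows essentially the same route as the paper: reduce to the Hermitian case by splitting $a$ into its Hermitian and anti-Hermitian parts, form the Sugawara Virasoro vector $\frac{1}{2\|a\|^2}a_{-1}a$, dominate its zero mode by $L_0$ via the identification with $\nu^{U_{\{\omega_a\}}}$, apply Lemma~\ref{productLemma} for $m\geq 0$, and pass to $m<0$ with Equation~(\ref{equation_a*estimate2}). The only differences are that you spell out a few details the paper leaves implicit (why $\omega_a$ is Hermitian and quasi-primary, and the bookkeeping showing the two Hermitian pieces have norm at most $\|a\|$ so that the constant $2^{3/2}\|a\|$ comes out), and these are all accurate.
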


\begin{proof}
Let $a \in V_1$. If $a=0$ the claim is trivial. For $a\neq 0$ we first assume that $a$ is Hermitian. Note that $a_0a = [a,a]=0$. It is well-known that  
$$
\tilde{\nu} = \frac{1}{2\|a\|^2 } a_{-1}a
$$ 
is a Virasoro vector with central charge $1$, see \cite[Section 5.7]{Kac98} and 
\cite[Section 2.3]{KaRa}.  Moreover, $\tilde{\nu}$ is Hermitian because $a$ is. Finally, since $a$ must be primary, 
$L_1a_{-1}a= a_0a=0$ so that $\tilde{\nu}$ is a Hermitian quasi-primary Virasoro vector. By Lemma \ref{productLemma}
we see that 
$$
\|a_m \|_n^2 \leq 2\|a\|^2\|\tilde{L}_0\|_n  \leq 2\|a\|^2 \|L_0\|_n \leq 2\|a\|^2 |n|  
$$
for all $m \in \Zgeq$ and all $n \in \ZZ$. If $-m\in \Zgeq$ then, by Equation (\ref{equation_a*estimate2}), and recalling that $a=a^*$, we find 
$$
\|a_m\|_n^2 = \|a_{ - m}\|_{n+|m|}^2 \leq 2 \|a\|^2 (|n| + |m| ) \leq 2\|a\|^2 (1+|m|)(1+ |n| ) \,.
$$
Accordingly, if $a \in V_1$ is Hermitian, then 
$$\|a_m\|_n \leq \sqrt{2}\|a\| (1+|m|)^{\frac12}(1+ |n| )^{\frac12}$$
for all $m, n \in \ZZ$. If $a$ is not Hermitian, we can write  
$$
a = \frac12 (a+ a^*) + i \frac{1}{2}(ia^*-ia)
$$ 
and thus 
\begin{eqnarray*}
\|a_m\|_n  &\leq& \frac12\|(a + a^*)_m\|_n  +   \frac12\|(ia^* -i a)_m\|_n\\
&\leq& 2^{\frac32} \|a\| (1+|m|)^{\frac12}(1+ |n| )^{\frac12}
\end{eqnarray*}
for all $m, n \in \ZZ$. 
\end{proof}

We now discuss some results on energy bounds for primary vectors with conformal dimension $d \neq 1$, 
cf. \cite{CTW22}.

\begin{lem} \label{LemmaPrimary1}
Let $V$ be a simple unitary vertex operator algebra  and let $a \in V$ be a primary vector with conformal dimension $d_a \neq 1$.
Then there is a real number $A \geq 0$ such that 
$$\|a_m\|_n \leq  A \sqrt{1 + |m|} (1+|n|) (\|a_0\|_n  + \|a_0\|_{n-m} )$$
for all $m, n \in \ZZ$. Moreover, if  $\|a_0 (L_0 + 1_V)^{-s}\| < +\infty$ for some $s \geq 0$ then $a$ satisfies $(s+1)$-th order energy bounds and 
$\|a_m  (L_0 + 1_V)^{-s-1}\| \leq 2A (1+|m|)^{s + \frac12} \|a_0 (L_0 +1_V)^{-s}\|$ for all $m\in \ZZ$. 
\end{lem}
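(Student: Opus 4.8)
The plan is to exploit the commutation relation \eqref{l_-1commutation} between $L_{-1}$ and the modes $a_n$ to set up a recursion that expresses $a_m$ in terms of $a_{m\pm 1}$ and the "boundary" modes $a_0$, and then to bootstrap the linear energy bound of the quasi-primary Virasoro vector $\nu$ (Proposition \ref{boundVirProp}) together with Lemma \ref{productLemma}. First I would record that, since $a$ is primary of dimension $d_a\neq 1$, the relation $[L_{-1},a_n]=(-n-d_a+1)a_{n-1}$ can be solved for $a_{n-1}$: the coefficient $-n-d_a+1$ vanishes only at $n=1-d_a$, which is a single value, so for all $n\neq 1-d_a$ one has
\begin{equation*}
a_{n-1} = \frac{1}{-n-d_a+1}\bigl(L_{-1}a_n - a_nL_{-1}\bigr).
\end{equation*}
Applying this to a vector $b\in V_{\leq n}$ and using $L_{-1}V_{\leq k}\subset V_{\leq k+1}$, the factor $L_{-1}$ acting on $a_nb\in V_{\leq n-m}$ contributes $\|L_{-1}\|$ on $V_{\leq n-m}$, which is bounded by $\|(L_0+1_V)\|$-type estimates, i.e.\ by a constant times $(1+|n-m|)$; the factor $a_nL_{-1}b$ involves $a_n$ on $V_{\leq n+1}$. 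This yields a recursion increasing $|m|$ by one at the cost of a factor controlled by $(1+|n|)$ and a shift in the grading index. Iterating from the boundary value $a_0$ (going up in $m$) and from $a_{1-d_a+1}$ or rather using skewsymmetry/Equation \eqref{equation_a*estimate2} to handle negative $m$, one collects roughly $|m|$ factors, but the point is that the $L_{-1}$ terms telescope when one instead passes to the generating-function identity $[L_{-1},Y(a,z)]=\frac{d}{dz}Y(a,z)$, so that in fact a single application of a Virasoro-type Gronwall estimate (as in \cite{CW05,GW85}, and exactly as in the proof of Proposition \ref{boundVirProp}) gives the $\sqrt{1+|m|}$ growth rather than a polynomial of degree $|m|$.

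More concretely, the cleanest route is the one used for primary fields in \cite{CTW22}: one shows that the operator-valued function $m\mapsto a_m b$ satisfies a recursion of the form $a_{m+1}b = \alpha_m L_1 a_m b + (\text{lower order in }L_1)$ using \eqref{l_1commutation} together with $L_1 a=0$, which for a primary vector reduces to $[L_1,a_n]=-(n-d_a+1)a_{n+1}$; combined with the $L_{-1}$ relation this pins down $a_m$ in terms of $a_0$ by a two-term recursion whose coefficients are $O(1/|m|)$, producing the $\sqrt{1+|m|}$ factor after summation. The $(1+|n|)$ factor and the appearance of both $\|a_0\|_n$ and $\|a_0\|_{n-m}$ come from the two directions of the recursion (raising versus lowering $m$) and from the norm bounds $\|L_{\pm 1}\|_k\leq C(1+|k|)$, which follow from Proposition \ref{boundVirProp} applied to $\nu$ (taking $\tilde c=c$, $m=\pm 1$). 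I would also use the estimate $\|b\|_{\text{on }V_{\leq k}}$-type inequalities already recorded in Section \ref{section_bounds}, in particular $a_mV_{\leq n}\subset V_{\leq n-m}$ and \eqref{equation_a*estimate2}, to reduce the case $m<0$ to $m>0$ via $\|a_m\|_n=\|a^*_{-m}\|_{n-m}$, noting $a^*$ is again primary of the same conformal dimension $d_a\neq 1$, which is why only $\|a_0\|_n$ and $\|a_0\|_{n-m}$ (and not higher boundary terms) enter.

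For the second assertion, assume $\|a_0(L_0+1_V)^{-s}\|<+\infty$, i.e.\ $\|a_0\|_n\leq C'(1+|n|)^s$ for all $n$. Substituting into the first inequality and using $(1+|n-m|)^s\leq (1+|m|)^s(1+|n|)^s$ gives
\begin{equation*}
\|a_m\|_n \leq A\sqrt{1+|m|}\,(1+|n|)\bigl(C'(1+|n|)^s + C'(1+|m|)^s(1+|n|)^s\bigr)\leq 2AC'(1+|m|)^{s+\frac12}(1+|n|)^{s+1},
\end{equation*}
which is exactly $(s+1)$-th order energy bounds with the stated constant, after rephrasing via Remark \ref{smeared_remark} / the definition of energy bounds in terms of $\|a_m(L_0+1_V)^{-s-1}\|$.

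The main obstacle I anticipate is establishing the $\sqrt{1+|m|}$ (rather than polynomial-in-$|m|$) dependence cleanly: the naive iteration of the $L_{-1}$-commutator produces a product of $|m|$ factors each of size $O(1)$, which is far too weak, and one genuinely needs the quadratic-form/Gronwall argument (the same mechanism that yields the $(1+|m|)^{3/2}$ bound for Virasoro modes, here improved by one power because $a$ is primary, not merely quasi-primary) to extract square-root growth. Getting the bookkeeping of the grading shifts right so that the two boundary terms $\|a_0\|_n$ and $\|a_0\|_{n-m}$ — and nothing worse — appear is the delicate part; everything else is routine manipulation of the relations already collected in Sections \ref{section_unitary} and \ref{section_bounds}.
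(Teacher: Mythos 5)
There is a genuine gap at the heart of your argument, namely in how the $\sqrt{1+|m|}$ factor is obtained. You propose to reach $a_m$ from $a_0$ by iterating the commutators with $L_{\pm 1}$ (Equations \eqref{l_-1commutation}, \eqref{l_1commutation}), and you correctly observe that the naive iteration produces far too many factors; but your proposed fix --- an appeal to a ``Virasoro-type Gronwall estimate'' or to a ``two-term recursion whose coefficients are $O(1/|m|)$'' --- is never carried out, and it is not clear it can be: the Gronwall/quadratic-form mechanism of \cite{CW05,GW85} relies on the full Virasoro commutation relations among the $L_m$ themselves and does not transfer to the modes of an arbitrary primary field. You have explicitly flagged this as ``the main obstacle,'' which is an honest assessment: as written, the first (and only nontrivial) assertion of the lemma is not proved.

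The missing idea is that for a \emph{primary} vector the commutation relation \eqref{EqQuasi-Primary/PrimaryCommutation} holds for \emph{all} $m\in\ZZ$, not just $m\in\{-1,0,1\}$. Taking $n=0$ there gives the single identity $[L_m,a_0]=(d_a-1)m\,a_m$, so for $m\neq 0$ and $d_a>1$ one has in one step
\begin{equation*}
\|a_m\|_n \;\leq\; \frac{1}{|m|(d_a-1)}\bigl(\|L_m\|_n\,\|a_0\|_n+\|a_0\|_{n-m}\,\|L_m\|_n\bigr),
\end{equation*}
and Proposition \ref{boundVirProp} supplies $\|L_m\|_n\leq(1+\sqrt{c/3})(1+|m|)^{3/2}(1+|n|)$; dividing $(1+|m|)^{3/2}$ by $|m|$ yields exactly the $\sqrt{1+|m|}$ growth and the two boundary terms $\|a_0\|_n$, $\|a_0\|_{n-m}$, with no recursion, no Gronwall argument, and no need to treat negative $m$ separately via \eqref{equation_a*estimate2}. (The cases $m=0$ and $d_a=0$ are handled trivially and must be noted, since $d_a=0$ makes $d_a-1$ negative.) Your treatment of the second assertion --- substituting $\|a_0\|_k\leq\|a_0(L_0+1_V)^{-s}\|(1+|k|)^s$ and using $(1+|n-m|)^s\leq(1+|m|)^s(1+|n|)^s$ --- is correct and matches the paper, but it rests on the first inequality, which your proposal does not establish.
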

\begin{proof} If $d_a =0$ then $a = (\Omega | a)\Omega$, so that $a_m = \delta_{m,0} (\Omega |a) 1_V$  
and the statement trivially holds with $A \geq \frac12$. Let us now consider the case $d_a>1$. We follow the argument in the first part of the proof of 
\cite[Proposition 3.1]{CTW22}. 
 If $m=0$ the inequality is trivially satisfied for any $A \geq \frac12$. Let us now assume that $m\neq 0$. 
Since $a$ is primary we have $[L_m,a_0] = (d_a-1)m a_m $ and since $d_a \neq 1$ we have 
\begin{eqnarray*}
\|a_m\|_n &\leq& \frac{1}{|m|(d_a-1)} (\|L_m a_0 \|_n + \|a_0L_m \|_n )   
\end{eqnarray*}
for all $m,n \in \ZZ$, $m\neq 0$.  By Proposition \ref{boundVirProp} 
we have
$$
\|L_m\|_n \leq (1 + \sqrt{c/3})(1+ |m|)^\frac32(1+|n|)     
$$
for all $m, n \in \ZZ$. Hence, 
\begin{eqnarray*}
\|a_m\|_n &\leq& \frac{2(1 + \sqrt{c/3})}{d_a-1} \sqrt{1 + |m|} (1+|n|) (\|a_0\|_n  + \|a_0\|_{n-m} ) 
\end{eqnarray*}
which proves the first claim by taking  $A = \frac{2(1 + \sqrt{c/3})}{d_a-1} + \frac12$. 
Now, we have 
\begin{eqnarray*}
\|a_0\|_n  + \|a_0\|_{n-m} &\leq& \|a_0(L_0 + 1_V)^{-s}\|\left( (1 + |n|)^s  + (1 + |n-m|)^s \right) \\
&\leq &  \|a_0(L_0 + 1_V)^{-s}\|\left( (1 + |n|)^s  + (1 + |n| + |m|)^s \right) \\
&=&  \|a_0(L_0 + 1_V)^{-s}\| (1 + |n|)^s  \left(1 +  \left(\frac{1 + |n| + |m|}{1 + |n|} \right)^s  \right) \\
&\leq& 2 \|a_0(L_0 + 1_V)^{-s}\|(1 + |n|)^s (1+|m|)^s  
\end{eqnarray*}
for all $m \in \ZZ$ and the second claim follows. 
\end{proof} 
Although this will not be needed in the rest of the paper, we state and prove a more sophisticated variant of Lemma  \ref{LemmaPrimary1} which is based on the functional analytic methods developed in \cite{CTW22} and it is of independent interest.

\begin{prop} 
\label{PropositionPrimary}
(cf.\ \cite[Proposition 3.1]{CTW22}) Let $V$ be a simple unitary vertex operator algebra and let $a \in V$ be a primary vector of conformal dimension $d \neq 1$. Assume that $\|a_0(L_0 + 1_V)^{-s}\| < +\infty$ for some $s>0$. Then there exist real numbers $C \geq 0$ and $t \geq 0$ such that $\|a_m(L_0 + 1_V)^{-s}\| \leq C(1 + |m|)^t $ for all $m \in \ZZ$. Hence, $a$ satisfies $s$-th order energy bounds.
\end{prop}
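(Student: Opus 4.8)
The statement is a transcription of \cite[Proposition 3.1]{CTW22} into the present purely vertex-algebraic setting, so the plan is to reproduce that argument. I would first dispose of the degenerate case $d=0$: then $a\in V_0=\CC\Omega$, hence $a_m=\delta_{m,0}(\Omega\,|\,a)1_V$ and there is nothing to prove. Assume therefore $d\in\Zgeq$, $d\geq 2$, and $a\neq 0$. Next I would reduce to nonnegative modes: since $\theta$ preserves the Virasoro action and $L_1a=0$, the PCT conjugate $a^{*}=(-1)^{d}\theta(a)$ is again a primary vector of conformal dimension $d$, and since $a_0$ commutes with $L_0$ one has $\|a^{*}_0(L_0+1_V)^{-s}\|=\|a_0(L_0+1_V)^{-s}\|<+\infty$. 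Hence, by Equation (\ref{equation_a*estimate2}) (namely $\|a_m\|_n=\|a^{*}_{-m}\|_{n-m}$), it suffices to prove a bound of the form $\|b_m\|_n\leq C(1+m)^{t}(1+n)^{s}$ for all $m\in\Zgeq$ and every primary vector $b$ of conformal dimension $d$, with $C$ and $t$ depending only on $d$ and on $\|b_0(L_0+1_V)^{-s}\|$; the case $m<0$ then follows, with $(1+|m|)^{t}$ replaced by $(1+|m|)^{t+s}$, from the inequality $(1+n+|m|)^{s}\leq(1+|m|)^{s}(1+n)^{s}$.

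The heart of the matter is the estimate for $m>0$. By the primary commutation relations \eqref{EqQuasi-Primary/PrimaryCommutation} one has $[L_p,a_0]=(d-1)p\,a_p$, so $a_0$ together with the Virasoro modes $L_{-1},L_0,L_1$ generates, under iterated commutators, all the modes $a_m$; moreover $L_{-1},L_0,L_1$ satisfy linear energy bounds by Proposition \ref{boundVirProp}. The plan is to run the functional analytic argument of \cite{CTW22}: one works in the Hilbert space completion $\mathcal{H}_V$, where $L_0$, $\tfrac12(L_1+L_{-1})$ and $\tfrac{1}{2i}(L_1-L_{-1})$ are essentially self-adjoint on the dense domain $V$ (the Virasoro energy bounds make every vector of $V$ an analytic vector for these operators, with a common lower bound on the radius of analyticity depending only on $c$), and one exploits the one-parameter unitary groups $U_t$ they generate. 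Conjugating the bounded operator $a_0(L_0+1_V)^{-s}$ by $U_t$ and using that along the relevant flow $L_0+1_V$ is transformed into $L_0\cosh t-\tfrac{1}{2i}(L_1-L_{-1})\sinh t+1_V$ — which is comparable to $L_0+1_V$ up to a factor $\leq e^{|t|}$ together with a perturbation of lower order relative to $L_0$, since $\|(L_1-L_{-1})(L_0+1_V)^{-1}\|<+\infty$ — one obtains bounds on the conjugates $U_t\,a_0(L_0+1_V)^{-s}\,U_{-t}$ with constants growing at most exponentially in $|t|$. Finally one recovers $a_m$ from these conjugates as a Fourier coefficient of an operator-valued function of the group parameters, at which point the exponential-in-$t$ estimate becomes a polynomial-in-$m$ estimate $\|a_m(L_0+1_V)^{-s}\|\leq C(1+|m|)^{t}$, which is precisely the assertion that $a$ satisfies $s$-th order energy bounds.

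The main obstacle is exactly this functional analytic step, and it genuinely cannot be shortcut: the naive route, writing $[L_p,a_0](L_0+1_V)^{-s}=L_pa_0(L_0+1_V)^{-s}-a_0L_p(L_0+1_V)^{-s}$ and estimating the two summands separately, only recovers $(s+1)$-th order bounds — exactly as in Lemma \ref{LemmaPrimary1} — because $\|L_p(L_0+1_V)^{-s}\|=+\infty$ whenever $0<s<1$; the missing power must be extracted from the commutator structure, i.e.\ from the Möbius covariance of the primary field $Y(a,z)$. Making this rigorous requires the usual care with unbounded operators: checking invariance of the relevant domains under the flow, justifying the interchange of the conjugation with the functional calculus $(\,\cdot\,)^{-s}$, and comparing $(U_t(L_0+1_V)U_{-t})^{-s}$ with $(L_0+1_V)^{-s}$. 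I would follow \cite[Section 3]{CTW22} for these points; once they are in place, the remaining bookkeeping — polynomial growth in $m$ and the passage to the definition of $s$-th order energy bounds — is routine.
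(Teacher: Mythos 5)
Your route is genuinely different from the paper's, and the difference matters. The paper does \emph{not} reprove \cite[Proposition 3.1]{CTW22}: it invokes it as a black box, namely as the purely qualitative statement that for each fixed $f\in C^\infty(S^1)$ the operator $Y_0(a,f)(L_0+1_V)^{-s}$ extends to a bounded operator $\Phi_a(f)$ on $\mathcal{H}_V$, with no control on how the bound depends on $f$. (So the proposition here is strictly more than a ``transcription'' of the cited result --- the new content is precisely the polynomial dependence on $m$.) That quantitative upgrade is then obtained by soft functional analysis: $\Phi_a$ is a linear map from the Fr\'echet space $C^\infty(S^1)$ to the Banach space $B(\mathcal{H}_V)$; it is closed, closedness being checked on matrix elements using only the cruder $(s+1)$-th order bound of Lemma \ref{LemmaPrimary1}; the closed graph theorem then gives $\|\Phi_a(f)\|_{B(\mathcal{H}_V)}\leq C\|f\|_t$ for some $t\geq 0$, and evaluating at $f(e^{i\vartheta})=e^{im\vartheta}$ yields the claim for all $m\in\ZZ$ at once, with no need for your reduction to $m\in\Zgeq$. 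What this buys is that one never has to track constants through the M\"obius-covariance machinery; what it gives up is any effective value of $C$ and $t$.

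You instead propose to open the black box and rerun the conjugation argument of \cite{CTW22} with explicit constants. Your preparatory steps are fine (the reduction to $m\in\Zgeq$ via $\|a_m\|_n=\|a^*_{-m}\|_{n-m}$, and the correct observation that the naive commutator estimate only yields $(s+1)$-th order bounds), and the exponential-in-$|t|$ comparison of $U_t(L_0+1_V)U_{-t}$ with $L_0+1_V$ is plausible. But the decisive step --- ``one recovers $a_m$ from these conjugates as a Fourier coefficient \dots at which point the exponential-in-$t$ estimate becomes a polynomial-in-$m$ estimate'' --- is asserted, not proved, and it is exactly where all the work lies: you would have to realize the $m$-th character, in the $d$-twisted M\"obius action, as a superposition of transforms of the constant function using group elements of size at most logarithmic in $|m|$ and with coefficients of polynomially bounded total mass, and you would also have to justify exchanging the conjugation with the functional calculus $x\mapsto x^{-s}$ for non-integer $s$ (operator monotonicity is only directly available for $0<s\leq 1$). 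None of this is supplied, and it is not available in \cite{CTW22} in the quantitative form you need. So, as written, the proposal has a genuine gap at its key step; the closed graph theorem argument in the paper is the standard device for circumventing precisely this difficulty.
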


\begin{proof} If $d=0$ then $a_m=0$ for all $m \neq 0$ and the statement is trivial. Let us consider the case $d>1$.  We will make use of the smeared vertex operators in Remark \ref{smeared_remark}. First note that $\|a^*_0(L_0 + 1_V)^{-s}\| =  \|a_0(L_0 + 1_V)^{-s}\| < +\infty$. 
By \cite[Proposition 3.1]{CTW22} the operator $Y_0(a,f)(L_0 +1_V)^{-s}: V \to \mathcal{H}_V$ extends to 
a continuous linear map $\Phi_a(f):  \mathcal{H}_V \to \mathcal{H}_V$ for all $f \in C^\infty(S^1)$. 
Let $B(\mathcal{H}_V)$ denote the Banach algebra of continuous linear maps from $\mathcal{H}_V$ into 
$\mathcal{H}_V$ with the usual operator norm $\| \cdot \|_{B(\mathcal{H}_V)}$ (in fact a $C^*$-algebra). It easy to see that the map $\Phi_a: C^\infty(S^1) \to B(\mathcal{H})_V$ defined by 
$C^\infty(S^1) \ni f \mapsto \Phi_a(f) \in B(\mathcal{H}_V)$ is linear.  Moreover, by Lemma \ref{LemmaPrimary1}, 
there are real numbers $B \geq 0$ and $q \geq 0$ such that 
$\|\Phi_a(f)(L_0 + 1_{\mathcal{H}_V})^{-1}\|_{B(\mathcal{H}_V)} \leq B \|f \|_q$, 
where 
$\|\cdot \|_q$ is the norm on $C^\infty(S^1)$ defined in Remark \ref{smeared_remark}. Now, $C^\infty(S^1)$ with the usual topology of uniform convergence of functions and their derivatives is a Fr\'echet space. In fact, its topology can be induced by the increasing sequence of norms $\| \cdot \|_k$, $k \in \Zgeq$. Let $f_n $ be a sequence of smooth function converging to $f$ in $C^\infty(S^1)$ and assume that $\Phi_a(f_n)$ converges to $T\in B(\mathcal{H}_V)$ in the norm topology of $B(\mathcal{H}_V)$. Then, for any $b,c \in V$, 
\begin{eqnarray*}
|(b|Tc) - (b|\Phi_a(f)c)| &\leq & |(b|Tc) - (b|\Phi_a(f_n)c)| + |(b|\Phi_a(f_n)c) - (b|\Phi_a(f)c)| \\
& \leq &\|b\| \|c\| \| T - \Phi_a(f_n) \|_{B(\mathcal{H}_V)} + B \|b\| \|(L_0 + 1_V)c)\|f_n - f\|_q  \\
&\to 0& \; \; \text{as} \; n\to + \infty 
\end{eqnarray*}
so that $T = \Phi_a(f)$. Thus, $\Phi_a$ is a closed linear map and by the closed graph theorem 
(see, e.g., \cite[Theorem 2.15]{RudinFA}) 
$\Phi_a$ is continuous. Hence, there exist a real number $C \geq 0$ and an integer $t  \geq 0$ such that 
$\|Y_0(a,f)(L_0 + 1_V)^{-s}\| \leq C \|f\|_t$ for all $f \in C^\infty(S^1)$ and the conclusion follows from Remark \ref{smeared_remark}.
 \end{proof}
 
\begin{rem} It is proved in \cite{CTW22} that if a primary vector $a \in V_d$ with $d>1$ satisfies $s$-th order energy bounds with $s < d-1$ then  $a = 0$. Hence a $(d-1)$-th order energy bound is optimal. 
 \end{rem}
 
 An interesting class of primary vectors with conformal dimension $d>1$ and satisfying energy bounds comes from the Zamolodchikov
 $\mathcal{W}_3$-algebra. Let $V$ be a simple unitary vertex operator algebra. We say that a Hermitian primary vector 
 $\tilde{\mu} \in V_3$ is a {\it Hermitian  $\mathcal{W}_3$ vector} with central charge $\tilde{c}\geq 0$ if there is a quasi-primary Hermitian Virasoro vector $\tilde{\nu}$ such that the coefficients of the formal series 
\begin{equation}
 Y(\tilde{\mu},z) =  \sum_{n \in \ZZ} \tilde{W}_n z^{-n-3}\, , \quad  Y(\tilde{\nu},z) = \sum_{n \in \ZZ} \tilde{L}_n z^{-n-2} 
 \end{equation} 
satisfy the $\mathcal{W}_3$ algebra commutation relation
    \begin{eqnarray*}
    [\tilde{W}_m, \tilde{W}_n] &=& \frac{c}{3\cdot 5!}(m^2-4)(m^2-1)m\delta_{m+n,0} \\
                &+&  b^2(m-n)\tilde{\Lambda}_{m+n} + \left[\frac1{20}(m-n)(2m^2-mn+2n^2-8))\right]\tilde{L}_{m+n}
    \end{eqnarray*}
    for all $m,n \in \ZZ$,  
    where $b^2 = \frac{16}{22+5c}$ and
    $\tilde{\Lambda}_n := (\tilde{\nu}_{-2}\tilde{\nu} - \frac{3}{10}\tilde{\nu}_{-1}\tilde{\nu}_{-1}\tilde{\nu} )_n$, $n\in \ZZ$. 
    
    It is not hard to see that if $\tilde{\mu} \in V$ is a Hermitian $\mathcal{W}_3$ vector and $U_{\tilde{\mu}}$ is the unitary subalgebra   
    of $V$ generated by $\tilde{\mu}$, then $\tilde{\nu} = \nu^{U_{\tilde{\mu}}}$. In particular $\tilde{\nu}$ is determined by 
    $\tilde{\mu}$.  The unitary $\mathcal{W}_3$ vertex operator algebras studied in \cite{CTW22b} give many examples of Hermitian
    $\mathcal{W}_3$ vectors.

   The following proposition follows from the results in \cite[Section 4]{CTW22}. 
   
   \begin{prop}
   \label{boundW3_proposition}  
   Let $V$ be a simple unitary vertex operator algebra and let $a \in V_3$ be a Hermitian $\mathcal{W}_3$ vector with central charge $\tilde{c}$.  Then $a$ satisfies third order energy bounds. Moreover, if $\tilde{c} \geq 1$ then $a$ satisfies the optimal  second order energy bounds.
   \end{prop}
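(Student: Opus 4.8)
The plan is to reduce the statement, by means of the results already obtained for primary vectors, to an estimate for a single positive operator attached to $a$, and then to appeal for that estimate to the analysis of the $\mathcal{W}_3$ commutation relations carried out in \cite[Section 4]{CTW22}. Write $Y(a,z)=\sum_{n\in\ZZ}\tilde W_n z^{-n-3}$ and let $\tilde\nu$, with modes $\tilde L_n=\tilde\nu_n$, be the quasi-primary Hermitian Virasoro vector associated with $a$; since $a\in V_3$ one has $\tilde W_m=a_m$ for all $m\in\ZZ$. As $a$ is a primary vector of conformal dimension $d=3\neq 1$, Proposition \ref{PropositionPrimary} shows that it is enough to prove $\|a_0(L_0+1_V)^{-s}\|<+\infty$ with $s=3$ in order to obtain third order energy bounds, and with $s=2$ (when $\tilde c\geq 1$) in order to obtain the optimal second order bounds; equivalently, it suffices to prove $\|a_0\|_n\leq C(1+|n|)^{3}$ in general, respectively $\|a_0\|_n\leq C(1+|n|)^{2}$ when $\tilde c\geq 1$, for some $C\geq 0$ and all $n\in\ZZ$. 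Since $a$ is Hermitian we have $a^*=a$, and since $d=3$, Lemma \ref{productLemma} gives $\|a_0\|_n^{2}\leq\|(a_{-3}a)_0\|_n$. Everything is thus reduced to bounding, on each $V_{\leq n}$, the weight-zero mode of the vector $a_{-3}a=a_{(-1)}a\in V_6$, i.e.\ of the state whose vertex operator is the normal-ordered square of $Y(a,z)$.

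The central step is then to control $(a_{-3}a)_0$ on $V_{\leq n}$. Unfolding it by the Borcherds identity in the form (\ref{B-id_hom2}) yields a Sugawara-type quadratic expression in the modes $\tilde W_m$,
\[
(a_{-3}a)_0=\tilde W_0^{2}+\tilde W_1\tilde W_{-1}+\tilde W_{-1}\tilde W_1+\tilde W_2\tilde W_{-2}+\tilde W_{-2}\tilde W_2+2\sum_{k\geq 3}\tilde W_{-k}\tilde W_k .
\]
Substituting the explicit $\mathcal{W}_3$ relations one rewrites the bilinears $[\tilde W_m,\tilde W_{-m}]$ in terms of the modes $\tilde L_k$, of the weight-four quasi-primary vector $\tilde\Lambda=\tilde\nu_{-2}\tilde\nu-\frac{3}{10}\tilde\nu_{-1}\tilde\nu_{-1}\tilde\nu$, and of a scalar; the $\tilde L_k$ satisfy linear energy bounds (Proposition \ref{boundVirProp}) with a constant depending only on $\tilde c$, and from these one extracts polynomial bounds for $\tilde\Lambda_0$ on $V_{\leq n}$. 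What remains --- and what I expect to be the main obstacle --- is a non-circular control of the tail $\sum_{k\geq 3}\tilde W_{-k}\tilde W_k$ on $V_{\leq n}$: the crude bound $\|\tilde W_k b\|^{2}\leq(b|(a_{-3}a)_0 b)$ furnished by Lemma \ref{productLemma}, summed over the roughly $|n|$ values of $k$ acting nontrivially on $V_{\leq n}$, is self-defeating, so a genuinely sharper argument is required. This is exactly what \cite[Section 4]{CTW22} supplies: it yields a polynomial bound for $\|(a_{-3}a)_0\|_n$, of order $6$ in general and of order $4$ when $\tilde c\geq 1$, the improvement in the latter range resting on the finer structure of the unitary $\mathcal{W}_3$ vacuum module. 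Via Lemma \ref{productLemma} this bounds $\|a_0\|_n$ by a constant multiple of $(1+|n|)^{3}$ in general and of $(1+|n|)^{2}$ when $\tilde c\geq 1$.

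Finally, I would transfer the estimate from the $\mathcal{W}_3$ data to $V$. As recalled in Section \ref{section_unitary}, $\tilde\nu$ is the Virasoro vector $\nu^{U}$ of the unitary subalgebra $U\subset V$ generated by $a$, so $\tilde L_0\leq L_0$ on $V$ and the bounds above, originally phrased with respect to $\tilde L_0$, hold a fortiori with respect to $L_0$; note also that all of them use only the $\mathcal{W}_3$ relations, unitarity, and Proposition \ref{boundVirProp}, which are available verbatim for the action on $V$. Combining this with the reduction of the first paragraph and with Proposition \ref{PropositionPrimary} yields third order energy bounds for $a$, and the optimal second order bounds when $\tilde c\geq 1$. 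Everything apart from the estimate on $\sum_{k\geq 3}\tilde W_{-k}\tilde W_k$ is bookkeeping with the formulas of Section \ref{section_unitary} and the Virasoro bounds of Proposition \ref{boundVirProp}.
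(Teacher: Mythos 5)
The paper offers no proof of this proposition beyond the one-line citation to \cite[Section 4]{CTW22}, and your proposal takes essentially the same route: the reduction steps you supply (Proposition \ref{PropositionPrimary}, Lemma \ref{productLemma}, the expansion of $(a_{-3}a)_0$ into $\tilde W_0^2+\dots+2\sum_{k\geq 3}\tilde W_{-k}\tilde W_k$, and the comparison $\tilde L_0\leq L_0$) are all correct, and the one genuinely hard point --- the non-circular control of the tail $\sum_{k\geq 3}\tilde W_{-k}\tilde W_k$ --- is deferred to exactly the reference the paper itself defers to. Note only that the precise form in which you invoke that reference (a degree-$6$, resp.\ degree-$4$, bound on $\|(a_{-3}a)_0\|_n$) is your own repackaging: \cite[Section 4]{CTW22} establishes the energy bounds of order $3$ (order $2$ when $\tilde c\geq 1$) for the $W$-modes of a unitary positive-energy $\mathcal{W}_3$-module directly, so applying it to $V$ viewed as such a module and using $\tilde L_0\leq L_0$ yields the proposition at once, rendering most of your scaffolding optional rather than wrong.
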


\begin{defin}
\label{energy-bounded_definition} 
Let $V$ be a simple unitary vertex operator algebra. Then $V$ is 
{\it energy-bounded} if every $a \in V$ satisfies energy bounds.
\end{defin}

\begin{defin}
\label{subalgebra_energy-bounded_definition} 
Let $V$ be a simple unitary vertex operator algebra and let $U \subset V$ be a unitary subalgebra. Then $U$ is 
{\it energy-bounded} if every $a \in U$ satisfies energy bounds.
\end{defin}

\begin{rem}
\label{subalgebra_energy-bounded_remark} A unitary subalgebra $U$ of the simple unitary vertex operator algebra $V$ is energy-bounded if and only if $V$ is an energy-bounded unitary $U$-module, cf.\ \cite{CWX,Gui19a} and Section \ref{section_main} below. In particular, if $U$ is an energy-bounded unitary subalgebra of $V$ then it is also  an energy-bounded  simple unitary vertex operator algebra.
\end{rem}

The same proof of \cite[Proposition 6.1]{CKLW18} gives the following.

\begin{prop}
\label{bound_generated_prop}
Let $V$ be a simple unitary vertex operator algebra and let $U$ be a unitary subalgebra of $V$. If $U$ is generated by a 
family  of homogeneous vectors satisfying energy bounds, then $U$ is an energy-bounded unitary subalgebra. In particular, if $\mathscr{F} \subset V$ is a family of Hermitian quasi-primary vectors satisfying energy bounds then 
$U_{\mathscr{F}}$ is an energy-bounded  subalgebra of $V$.
\end{prop}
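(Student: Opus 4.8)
**The plan is to reduce the statement to a standard "generators close under products" argument via the Borcherds commutator formula.** Proposition \ref{bound_generated_prop} claims that if a unitary subalgebra $U$ of $V$ is generated by a family $\mathscr{F}$ of homogeneous vectors each satisfying energy bounds, then every vector in $U$ satisfies energy bounds. The key structural fact is that $U$ is spanned by vectors of the form $a^1_{(n_1)} a^2_{(n_2)} \cdots a^k_{(n_k)} \Omega$ with each $a^i \in \mathscr{F}$ and $n_i \in \ZZ$; equivalently, by iterated products $(\cdots(a^1_{(n_1)} a^2)_{(n_2)} a^3)_{\cdots}$. So it suffices to show that the set $S$ of homogeneous vectors satisfying energy bounds is closed under the $n$-th product operation $a, b \mapsto a_{(n)} b$ for all $n \in \ZZ$, contains $\Omega$, and contains $\mathscr{F}$; then $U_{\mathscr{F}} \subset S$ because $U_{\mathscr{F}}$ is the smallest vertex subalgebra containing $\mathscr{F}$ and (after checking $S \cap U$ is a vertex subalgebra) it is sandwiched.

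\textbf{The heart of the matter is the product estimate.} Suppose $a \in V_{d_a}$ and $b \in V_{d_b}$ both satisfy energy bounds, say $\|a_m\|_n \leq C_a(1+|m|)^{t_a}(1+|n|)^{s_a}$ and similarly for $b$. I want to bound $\|(a_{(r)}b)_m\|_n$, where $c := a_{(r)}b \in V_{d_a + d_b - r - 1}$ is homogeneous. Using the relation $a_{(r)} = a_{r+1-d_a}$ and the identity (\ref{B-id_hom2}) with $n = r+1-d_a$, one expresses $(a_{(r)}b)_k = (c)_k$ as a \emph{finite} sum (the binomial coefficient $\binom{n+d_a-1}{j} = \binom{r}{j}$ vanishes for $j > r$ when $r \geq 0$, and when $r < 0$ one argues differently, or alternatively uses skewsymmetry (\ref{skewsymmetry_equation}) to reduce to the case of a product with a nonnegative index) of terms of the shape $a_{n-j} b_{k-n+j}$ and $b_{k-j+d_a-1} a_{j+1-d_a}$. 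Each such term is a composition of two operators each satisfying energy bounds, and one uses the submultiplicativity $\|R_1 R_2\|_n \leq \|R_1\|_{?}\|R_2\|_n$ together with the grading behavior $b_p V_{\leq n} \subset V_{\leq n-p}$: if $a_q$ is applied after $b_p$ on $V_{\leq n}$, the intermediate space is $V_{\leq n-p}$, so $\|a_q b_p\|_n \leq \|a_q\|_{n-p}\|b_p\|_n$, which is controlled by $(1+|q|)^{t_a}(1+|n-p|)^{s_a} \cdot (1+|p|)^{t_b}(1+|n|)^{s_b}$. Since in each term the indices $p, q$ are of the form $k - n + j$, $n - j$ etc.\ with $k = m$ (after taking the appropriate $k$) and the index shift $|n-p| \leq |n| + |p| \lesssim (1+|n|)(1+|p|)$, everything collapses to a bound of the form $C_c (1+|m|)^{t_c}(1+|n|)^{s_c}$ with $s_c = s_a + s_b$ (or so) and $t_c$ a sum of the $t$'s plus the degree of the polynomial in $j$ coming from the binomial coefficients and the number of summands. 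This is exactly the computation carried out in \cite[Proposition 6.1]{CKLW18}, which is why the paper simply says "the same proof" applies.

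\textbf{The main obstacle, and where care is needed, is handling negative product indices $r$ (equivalently $a_{(r)}$ with $r$ large positive, i.e.\ $a_{r+1-d_a}$ with the subscript negative) so that the Borcherds sum (\ref{B-id_hom2}) is genuinely finite.} When $n + d_a - 1 = r \geq 0$ the binomial coefficient $\binom{r}{j}$ truncates the sum at $j = r$, giving finitely many terms and the argument runs directly. When $r < 0$, i.e.\ one takes a very negative mode $a_{(r)}$, the sum in (\ref{B-id_hom2}) is not obviously finite as written; however, for fixed $b$ the vector $a_{(r)}b$ is still a well-defined homogeneous vector of $V$, and one can instead invoke skewsymmetry (\ref{skewsymmetry_equation}) to rewrite $a_{(r)}b = -\sum_{j \geq 0}\frac{(-1)^{j+r}}{j!}(L_{-1})^j b_{(r+j)}a$ — here the sum \emph{is} finite because $b_{(r+j)}a = 0$ for $j$ large (the field axiom), $L_{-1}$ preserves the energy-bounds property (as $L_{-1}$ is a component of the energy-momentum field which satisfies linear energy bounds by Proposition \ref{boundVirProp}, or more elementarily because $[L_0, L_{-1}] = L_{-1}$ and one checks $\|L_{-1}\|_n$ is polynomially bounded), and $b_{(r+j)}a$ is covered by the previous case since one can also write it using modes. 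Thus one reduces every product to finitely many compositions of energy-bounded operators and finitely many applications of $L_{-1}$. Once the product estimate is in place, the final sentence about $U_{\mathscr{F}}$ for a family of Hermitian quasi-primary vectors follows immediately: such an $\mathscr{F}$ generates a unitary subalgebra by \cite[Proposition 5.23]{CKLW18} and the quoted discussion in Section \ref{section_unitary}, and the hypothesis is that its members satisfy energy bounds.
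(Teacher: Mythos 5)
Your overall strategy is the right one and is essentially the argument of \cite[Proposition 6.1]{CKLW18}, which is all the paper itself invokes here: reduce to showing that the set of homogeneous energy-bounded vectors is stable under $L_{-1}$ and under the products $a,b\mapsto a_{(r)}b$, expand the modes of $a_{(r)}b$ via Equation (\ref{B-id_hom2}), and estimate each composition $a_qb_p$ on $V_{\leq n}$ using $b_pV_{\leq n}\subset V_{\leq n-p}$ and submultiplicativity. Your second paragraph is exactly the computation the paper carries out in detail (in a closely related setting) in the proof of Lemma \ref{LemmaPrimary3}, and the concluding remarks about $U_{\mathscr{F}}$ for Hermitian quasi-primary generators are correct.

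The one step that does not work as written is your treatment of negative product indices via skewsymmetry. Equation (\ref{skewsymmetry_equation}) gives $a_{(r)}b=-\sum_{j\geq 0}\frac{(-1)^{j+r}}{j!}(L_{-1})^jb_{(r+j)}a$, and this sum is indeed finite; but for $0\leq j<-r$ the vector $b_{(r+j)}a$ is again a product with negative index, so you have only exchanged the roles of $a$ and $b$ without improving the index: as stated, the reduction is circular. Two standard repairs are available. First, your direct estimate already covers $r<0$: although the sum in (\ref{B-id_hom2}) is infinite as a formal expression, when restricted to $V_{\leq N}$ the rightmost factors $b_{k-n+j}$ and $a_{j+1-d_a}$ annihilate $V_{\leq N}$ once $j$ exceeds roughly $N$, so only $O(N)$ terms survive, and for fixed $r<0$ the coefficients $\binom{r}{j}$ grow only polynomially in $j$; this contributes an extra polynomial factor in $1+|n|$ and nothing worse --- precisely the bookkeeping done in the proof of Lemma \ref{LemmaPrimary3}. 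Second, one can use $(L_{-1}^{k}a)_{(-1)}=k!\,a_{(-k-1)}$ (from $[L_{-1},a_{(n)}]=-na_{(n-1)}$) together with the standard spanning set $a^1_{(-k_1)}\cdots a^m_{(-k_m)}\Omega$ with $k_i>0$ (see \cite{LL04}) to reduce everything to the $(-1)$-product of energy-bounded vectors; this is the route taken in \cite{CKLW18}. With either fix your proof is complete and coincides in substance with the cited one.
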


Proposition \ref{bound_generated_prop} together with Proposition \ref{boundVirProp}, Proposition \ref{boundV1Prop} and Proposition \ref{boundW3_proposition} 
 immediately gives the following theorem. 
 
 \begin{theo} 
 \label{energy_bounded_subalgebra_theorem}
 Let $V$ be a simple unitary vertex operator algebra and let $U$ be a unitary subalgebra of $V$. Assume that 
 $U$ is generated by $U_1$, a family of Hermitian quasi-primary Virasoro vector and a family of Hermitian 
 $\mathcal{W}_3$ vectors. Then $U$ is an energy-bounded unitary subalgebra of $V$.
 \end{theo}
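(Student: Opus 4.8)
The plan is to deduce Theorem \ref{energy_bounded_subalgebra_theorem} by combining the closure-under-generation result in Proposition \ref{bound_generated_prop} with the three explicit energy-bound estimates already established for the elementary building blocks. Concretely, the subalgebra $U$ is assumed to be generated by the family $\mathscr{F} = \mathscr{F}_1 \cup \mathscr{F}_{\mathrm{Vir}} \cup \mathscr{F}_{\mathcal{W}_3}$, where $\mathscr{F}_1$ is a set of vectors spanning $U_1$ (one may even take $\mathscr{F}_1 = U_1$ itself, or a basis of it), $\mathscr{F}_{\mathrm{Vir}}$ is a family of Hermitian quasi-primary Virasoro vectors, and $\mathscr{F}_{\mathcal{W}_3}$ is a family of Hermitian $\mathcal{W}_3$ vectors. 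The first step is to observe that every element of $\mathscr{F}$ is a homogeneous vector satisfying energy bounds: vectors in $U_1 \subset V_1$ satisfy $\tfrac12$-th order (in particular, linear) energy bounds by Proposition \ref{boundV1Prop}; quasi-primary Hermitian Virasoro vectors satisfy linear energy bounds by Proposition \ref{boundVirProp}; and Hermitian $\mathcal{W}_3$ vectors satisfy third order energy bounds by Proposition \ref{boundW3_proposition}.

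The second step is purely formal: since $U$ is, by hypothesis, the unitary subalgebra generated by the homogeneous family $\mathscr{F}$ (equivalently $U = U_{\mathscr{F}}$, which is automatically a $\ZZ$-graded unitary subalgebra because all vectors in $\mathscr{F}$ are homogeneous and Hermitian or, in the Virasoro and $\mathcal{W}_3$ cases, quasi-primary Hermitian, so the closure hypotheses $L_1 U \subset U$ and $\theta U \subset U$ hold), Proposition \ref{bound_generated_prop} applies verbatim and yields that $U$ is an energy-bounded unitary subalgebra of $V$. One should note here that $U_1$ need not consist of quasi-primary vectors, but this causes no difficulty: Proposition \ref{bound_generated_prop} only requires the generating family to be homogeneous and to satisfy energy bounds, and the second sentence of that proposition (the special case of Hermitian quasi-primary vectors) is not the one we invoke — we invoke the first, general sentence. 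Alternatively, one can replace each $a \in U_1$ by its Hermitian and skew-Hermitian parts $\tfrac12(a+a^*)$ and $\tfrac{i}{2}(ia^*-ia)$, which are Hermitian vectors in $V_1$ hence quasi-primary (as $V_1$-vectors are automatically primary), generate the same subalgebra, and still satisfy energy bounds by Proposition \ref{boundV1Prop}.

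There is essentially no main obstacle: the theorem is a direct corollary, and the only point requiring a line of care is the verification that $U_{\mathscr{F}}$ really is the given $U$ and really is a unitary subalgebra, i.e.\ that the generating set has been chosen so that $U_{\mathscr{F}}$ is $\ZZ$-graded and Hermitian. This is immediate from the remarks preceding the theorem: a family of Hermitian quasi-primary vectors generates a unitary subalgebra, and adjoining all of $U_1$ (or its Hermitian/skew-Hermitian parts) keeps the generating family Hermitian, so $U_{\mathscr{F}}^* = U_{\mathscr{F}}$ and $L_1 U_{\mathscr{F}} \subset U_{\mathscr{F}}$, whence $U_{\mathscr{F}}$ is unitary by \cite[Proposition 5.23]{CKLW18}. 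Thus the proof reduces to citing Proposition \ref{bound_generated_prop}, Proposition \ref{boundVirProp}, Proposition \ref{boundV1Prop} and Proposition \ref{boundW3_proposition} in sequence, exactly as the sentence introducing the theorem already advertises.
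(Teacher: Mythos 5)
Your proof is correct and is exactly the paper's argument: the theorem is stated as an immediate consequence of Proposition \ref{bound_generated_prop} combined with Propositions \ref{boundVirProp}, \ref{boundV1Prop} and \ref{boundW3_proposition}. The extra care you take about the $U_1$ generators (homogeneity sufficing for the first sentence of Proposition \ref{bound_generated_prop}, or splitting into Hermitian parts) is sound but not something the paper spells out.
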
 

We end this section by introducing the notion of exponentially energy-bounded unitary vertex operator algebra, which will be needed in Section \ref{section_main}.  

\begin{defin} Let $V$ be a simple unitary vertex operator algebra. 
We say that $V$ is {\it exponentially energy-bounded} if, for every $a\in V$ there is a $q>0$ 
such that $\|a_0q^{L_0}\| < +\infty$.  
\end{defin}

If $a \in V$ satisfies energy bounds, then there are real numbers $C \geq 0$, and $s \geq 0$ such that 
$\|a_0 \|_n \leq C(1 + |n|)^s$ for all $n \in \ZZ$ so that   
$\| a_0 q^{L_0} \|_n \leq C(1 +|n|)^s q^|n|$ for all $q > 0$ and all $n \in \ZZ$.   Hence, if $q \in (0,1)$, 
$$
\| a_0 q^{L_0} \| \leq \sup_{n \in \Zgeq} C(1 +n)^s q^n  < +\infty \, .
$$ 
As a consequence, if $V$ is energy-bounded then it is also exponentially energy-bounded.
The following proposition will be important for our main results in Section \ref{section_main}.

\begin{prop}
\label{proposition_exponentially_bounded} 
Let $V$ be a simple unitary vertex operator algebra. If $V$ is  $C_2$-cofinite then $\|a_0q^{L_0}\| < +\infty$ for all $a\in V$ and all 
$q \in (0,1)$. In particular $V$ is exponentially energy-bounded.  
\end{prop}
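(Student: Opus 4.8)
The plan is to exploit $C_2$-cofiniteness to get a finite spanning set for $V$ consisting of iterated products of low-mode operators applied to the vacuum, and then to control $\|a_0 q^{L_0}\|$ inductively on the length of such products. Recall that $C_2$-cofiniteness means $V/C_2(V)$ is finite-dimensional, where $C_2(V)$ is spanned by the vectors $a_{(-2)}b$ for $a,b\in V$. A standard consequence (see Gaberdiel–Neitzke, Li, etc.) is that there is a finite set of homogeneous vectors $a^1,\dots,a^r\in V$ such that $V$ is spanned by the vectors
\begin{equation}
a^{i_1}_{(-n_1)}a^{i_2}_{(-n_2)}\cdots a^{i_k}_{(-n_k)}\Omega, \qquad n_1\geq n_2\geq\cdots\geq n_k\geq 1,
\end{equation}
and moreover one may arrange the grading so that only finitely many ``building blocks'' occur; in particular the conformal weight of such a monomial grows at least linearly in $k$ (each factor $a^{i_\ell}_{(-n_\ell)}$ raises conformal weight by at least $n_\ell\geq 1$, in fact one controls things using that the $n_\ell$ are bounded below). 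Since $a\mapsto\|a_0 q^{L_0}\|$ is (essentially) subadditive and the finite spanning set reduces the problem to finitely many generators once we have a multiplicative estimate, the key is an estimate of the form $\|(a^{i}_{(-n)}b)_0\,q^{L_0}\|\leq (\text{const depending on }a^i,n)\cdot\|b_0\,q^{\prime L_0}\|$ for a slightly larger $q'<1$, which then telescopes.

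Concretely, the main step is: given $a\in V_d$ and a vector $b$ for which $\|b_0 q^{L_0}\|<+\infty$ for some $q\in(0,1)$, show that for each $n\geq 1$ the vector $c:=a_{(-n)}b$ satisfies $\|c_0 q'^{L_0}\|<+\infty$ for every $q'\in(0,1)$ with $q'<q$ (or some explicit comparison). For this I would use the Borcherds-type identity \eqref{B-id_hom2} to expand $c_0=(a_{(-n)}b)_0=(a_{n-d}\,b)_0$ as a sum $\sum_{j}(\pm)\binom{\cdots}{j}\big(a_{n-d-j}b_{j} + (\pm)b_{\cdots}a_{\cdots}\big)$ and then estimate $\|c_0 q'^{L_0}\|_m$ on each graded piece $V_{\le m}$. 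The point is that on $V_{\le m}$ only finitely many terms $j$ contribute (the series \eqref{B-id_hom2} is effectively finite on each $V_{\le m}$ because $a_\ell$ lowers/raises degree), and for the operators $a_\ell$ one has the crude but sufficient bound $\|a_\ell\|_m\leq \|a_\ell\|\cdot\mathbf 1$ together with $\|a_\ell\|_m=0$ unless $\ell\leq m$, and similarly for the $b$-factors; combining these with the assumed $\|b_0 q^{L_0}\|<\infty$ and carefully tracking how the $q^{L_0}$ weight redistributes across the two factors gives the claim for $q'$ slightly smaller than $q$. The loss from $q$ to $q'$ is harmless because we only iterate finitely many times (the depth $k$ is uniformly bounded? — no, it is not, see below), so I actually need the estimate with a *fixed* $q$, not a shrinking one; this is where the argument must be done with care.

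The hard part, and the reason the statement quantifies over all $q\in(0,1)$ rather than just ``some'' $q$, is precisely that in the $C_2$-cofinite spanning set the depth $k$ of the monomials is *unbounded*, so a naive telescoping that loses a factor at each step fails. The remedy is that one does not actually need to pay a price per factor: the exponential weight $q^{L_0}$ should be distributed so that a monomial of total conformal weight $N$ gets suppressed by roughly $q^N$, and since the individual low-mode operators $a^i_{(-n)}$ with $n$ bounded (the $C_2$ structure lets one take all $n$ from a finite set, or at least controls the $n_1\geq\cdots$) contribute boundedly many combinatorial terms each carrying at most a fixed constant, the sum of $\|(\text{monomial of weight }N)_0\,q^{L_0}\|$ over all monomials of weight $N$ grows at most like $(\text{poly in }N)\cdot C^{N}$ for some constant $C$, and this is beaten by $q^{N}$ once... no — this still diverges. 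The correct resolution, which I would carry out, is to not sum but rather to observe that on any fixed $V_{\le m}$ there are only *finitely many* monomials involved (since $V_{\le m}$ is finite-dimensional and $C_2$-cofiniteness gives a basis of $V$ by such monomials, of which only finitely many lie in $V_{\le m}$); hence $\|a_0 q^{L_0}\|_m<\infty$ trivially for each $m$, and the real content is the *uniformity in $m$*. So the genuine core is an estimate $\|(\text{any basis monomial of weight }N)_0\|_m\leq P(N,m)$ with $P$ polynomial — arising from Lemma \ref{productLemma}-type bounds and \eqref{B-id_hom2} applied inductively, using that $C_2$-cofiniteness bounds the number of monomials of weight $N$ polynomially in $N$ — after which $\|a_0 q^{L_0}\|\leq\sum_N P(N,\cdot)\,q^N<\infty$. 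Establishing that polynomial bound on $\|(\text{monomial})_0\|_m$ uniformly, via the Borcherds identity and induction on depth, is the step I expect to be the main obstacle.
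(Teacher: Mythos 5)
Your proposal does not reach a proof, and the approach as written has a fatal flaw. You reduce everything to an inductive estimate on $\|(a^i_{(-n)}b)_0\, q^{L_0}\|$ via the expansion (\ref{B-id_hom2}), and at the crucial point you invoke ``the crude but sufficient bound $\|a_\ell\|_m \leq \|a_\ell\|$''. But for any homogeneous $a \notin \CC\Omega$ the full operator norm $\|a_\ell\| = \sup_m \|a_\ell\|_m$ is infinite (this is Baumann's theorem, Remark \ref{bounded_remark}), so that bound is vacuous and the induction does not close: the whole difficulty is precisely to control how $\|a_\ell\|_m$ grows with $m$, which is the statement being proved. You acknowledge this yourself --- the proposal ends by flagging the uniform polynomial bound on $\|(\text{monomial})_0\|_m$ as ``the main obstacle'' --- so what you have is a plan with its central step missing, not a proof. (A secondary point: a polynomial-in-$N$ count of weight-$N$ monomials is not available in general; for $C_2$-cofinite $V$ the graded dimensions typically grow subexponentially but faster than any polynomial. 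For a \emph{fixed} $a$ this does not matter, since $a$ is a finite combination of monomials of bounded depth, but then your worry about unbounded depth is moot and the real issue is the one above.)

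The paper uses $C_2$-cofiniteness through an entirely different and much shorter mechanism, which your sketch never touches: Zhu's convergence theorem for trace functions. For homogeneous $a \in V_d$, $C_2$-cofiniteness guarantees $\operatorname{Tr}\bigl((a_{-d}a^*)_0\, q^{2L_0}\bigr) < +\infty$ for $q \in (0,1)$ by \cite[Theorem 4.4.1]{Zhu96}. Combining the $C^*$-identity, the bound of the norm of a positive operator by its trace, and Lemma \ref{productLemma} (which gives $(b \mid a_0^* a_0 b) \leq (b \mid (a_{-d}a^*)_0 b)$) yields
\begin{equation*}
\|a_0 q^{L_0}\|^2 = \|q^{L_0} a_0^* a_0 q^{L_0}\| \leq \operatorname{Tr}\bigl(q^{L_0}(a_{-d}a^*)_0 q^{L_0}\bigr) = \operatorname{Tr}\bigl((a_{-d}a^*)_0 q^{2L_0}\bigr) < +\infty .
\end{equation*}
This is where the hypothesis does its work; without importing Zhu's theorem (or an equivalent convergence statement), your combinatorial route would have to reprove a comparable growth estimate from scratch, and nothing in the proposal indicates how to do that.
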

\begin{proof}  
Let $q \in (0,1)$.\ It is enough to prove that $\|a_0q^{L_0}\| < +\infty$ for all homogeneous vectors $a \in V$. Let $a$ be homogeneous with conformal 
dimension  $d$. Since $V$ is assumed to be $C_2$-cofinite we can use \cite[Theorem 4.4.1]{Zhu96} to conclude that 
$\operatorname{Tr}((a_{-d}a^*)_0 q^{2L_0}) < +\infty$. Hence, by Lemma \ref{productLemma} 
we find
\begin{eqnarray*}
\|a_0 q^{L_0} \|^2 &=& \| (a_0 q^{L_0})^\dagger(a_0 q^{L_0}) \| \\
&=& \| q^{L_0} a^*_0 a_0 q^{L_0}\|  \\
&\leq& \operatorname{Tr}q^{L_0} a^*_0 a_0 q^{L_0} \\
&\leq& \operatorname{Tr}(q^{L_0} (a_{-d}a^*)_0 q^{L_0} ) \\
&\leq& \operatorname{Tr}((a_{-d}a^*)_0 q^{2L_0} ) \\
&<& + \infty  \,.   
\end{eqnarray*}
\end{proof} 

\end{section}

\begin{section}{Main results} \label{section_main}   

In this section we state and prove our main results on energy bounds for vertex operator algebras extensions. A vertex operator algebra automorphism 
$g$ of a simple unitary vertex operator algebra $V$ is said to be a {\it unitary automorphism} if $(ga|gb)=(a|b)$ for all $a,b \in V$. A vertex operator algebra automorphism is unitary if and only if it commutes with the PCT operator $\theta$, see \cite[Remark 5.18]{CKLW18}. Since the vertex operator algebra automorphisms preserve the conformal vector $\nu$, it follows that if $g$ is unitary then $g a^* = (ga)^*$ for all $a \in V$. If $G$ is a compact group of unitary automorphisms of $V$, then the fixed point subalgebra $V^G := \{a \in V: g \in G \Rightarrow ga =a \}$ is a unitary subalgebra of $V$, cf.\ \cite[Example 2.25]{CKLW18}.

\begin{theo}
\label{VGtheorem} 
Let $G$ be a compact group of unitary automorphisms of the simple unitary vertex operator algebra $V$ and let $U:=V^G$. Assume that $U$ is an energy-bounded 
unitary subalgebra of $V$. Then $V$ is energy-bounded.
\end{theo}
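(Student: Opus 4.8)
The plan is to reduce the problem to energy bounds for a well-chosen finite-dimensional space of vectors inside $U = V^G$, exploiting the averaging projection $e_U$ and the decomposition of $V$ into isotypic components for the compact group $G$. First I would fix a homogeneous $a \in V_d$ and decompose it via the Peter--Weyl / isotypic decomposition of $V$ under $G$: write $a = \sum_\pi a^\pi$ where $a^\pi$ lies in the $\pi$-isotypic component $V^\pi$, a finite sum since $a$ is homogeneous and each $V_d$ is finite-dimensional, so it suffices to bound a single $a$ transforming under an irreducible representation $\pi$ of dimension $d_\pi$. The key observation is Lemma \ref{productLemma}: for $m \in \Zgeq$ one has $\|a_m\|_n^2 \leq \|(a_{-d}a^*)_0\|_n$, and the crucial point is that the vector $a_{-d}a^*$ is \emph{$G$-invariant}. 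Indeed, since each $g \in G$ is a unitary automorphism, $g(a_{-d}a^*) = (ga)_{-d}(ga)^* = (ga)_{-d}(ga)^*$; averaging over $G$ and using that $a$ transforms irreducibly, the Schur-orthogonality relations give $\int_G (ga)_{-d}(ga)^*\, dg \in U$, and this average is (a positive multiple of) the projection $e_U$ applied to $a_{-d}a^*$, hence lies in $U$. More precisely $e_U(a_{-d}a^*) = a_{-d}a^*$ already because $a_{-d}a^*$ is fixed by $G$: one checks $g(a_{-d}a^*) = (ga)_{-d}(g a^*) = (ga)_{-d}(ga)^*$ and summing $a^* = \overline{\pi}$-transforming with $a$ gives a $G$-fixed result. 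Thus $(a_{-d}a^*)_0$ acts on $V$ as the zero mode of a vector in the energy-bounded subalgebra $U$.

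Next I would invoke the hypothesis that $U$ is energy-bounded: since $a_{-d}a^* \in U$, there exist $C \geq 0$, $s \geq 0$ with $\|(a_{-d}a^*)_0\|_n \leq C(1+|n|)^s$ for all $n$. Combined with Lemma \ref{productLemma} this yields $\|a_m\|_n \leq \sqrt{C}\,(1+|n|)^{s/2}$ for all $m \in \Zgeq$ and all $n$, i.e.\ an $(s/2)$-th order bound with no $m$-dependence for nonnegative modes. To handle negative modes $m < 0$, apply Equation (\ref{equation_a*estimate2}): $\|a_m\|_n = \|a^*_{-m}\|_{n-m}$, and since $-m \in \Zg$ we may repeat the argument with $a^*$ in place of $a$ (note $a^*$ also transforms under an irreducible, namely $\overline\pi$, and $(a^*)_{-d}(a^*)^* = (a^*)_{-d}a$ is again $G$-fixed). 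This gives $\|a_m\|_n \leq \sqrt{C'}\,(1+|n-m|)^{s'/2} \leq \sqrt{C'}\,(1+|m|)^{s'/2}(1+|n|)^{s'/2}$, which is an energy bound of the required polynomial form. Taking the maximum over the finitely many isotypic components appearing in the homogeneous decomposition of a general $a$ completes the proof that every $a \in V$ satisfies energy bounds.

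The main obstacle I anticipate is the bookkeeping around the claim that $a_{-d}a^*$ (or its $G$-average) actually lies in $U = V^G$, and in particular making precise the interaction between the $*$-operation, the $G$-action, and the product $a_{(-1)} = a_{-d}$; one must use that unitary automorphisms satisfy $g a^* = (ga)^*$ (established at the start of Section \ref{section_main}) and that the isotypic decomposition respects the vertex algebra products well enough. A secondary point requiring care is that the product $a_{-d}a^*$ for $a$ in a single $\pi$-isotypic component need not itself be $G$-fixed unless one either averages or decomposes $a^*$ correctly into the conjugate-isotypic component — the clean statement is that $\int_G (ga)_{-d}(ga)^*\,dg$ is $G$-fixed, equals $e_U(a_{-d}a^*)$ up to normalization, and dominates $a_{-d}a^*$ in the sense needed; alternatively one bounds $\|a_m b\|^2$ directly by $(b \mid \int_G (ga)_{-d}(ga)^* \,dg\; b)/\mathrm{vol}$-type expressions using that $G$ acts unitarily and $\|(ga)_m b\| = \|a_m (g^{-1}b)\|$ has norm controlled uniformly. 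Once the $G$-invariance of the relevant positive operator is nailed down, the rest is a direct application of the energy-boundedness of $U$ together with the elementary estimates (\ref{equation_a*estimate2}) and Lemma \ref{productLemma}.
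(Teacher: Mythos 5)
Your overall strategy---dominate $\|a_m b\|^2$ via Lemma \ref{productLemma} by the zero mode of a $G$-fixed vector lying in $U$, then invoke the energy bounds of $U$, handling negative modes through Equation (\ref{equation_a*estimate2})---is the right one, and the negative-mode step is a legitimate substitute for the paper's reduction to primary vectors via Lemma \ref{LemmaPrimary1}. However, the central invariance claim is false as stated, and the proposed repair does not close the gap. For $a$ in a $\pi$-isotypic component with $\dim\pi>1$, the vector $a_{-d}a^*$ is \emph{not} $G$-fixed: $g(a_{-d}a^*)=(ga)_{-d}(ga)^*$ equals $a_{-d}a^*$ only when $ga$ is a unimodular multiple of $a$, i.e.\ essentially only for one-dimensional $\pi$ (the trivial representation occurs in $\pi\otimes\overline{\pi}$ once, and the invariant vector is the \emph{diagonal sum} over a basis, not a single term $a\otimes a^*$). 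You acknowledge this in your last paragraph, but the fallback---replacing $a_{-d}a^*$ by its Haar average $x:=\int_G (ga)_{-d}(ga)^*\,dg=e_U(a_{-d}a^*)$---fails at the domination step: each $\left((ga)_{-d}(ga)^*\right)_0$ is a positive operator, so their \emph{average} $x_0$ does not dominate the single summand at $g=e$; what you control is $\int_G\|a_0 g^{-1}b\|^2\,dg$, which does not bound $\|a_0 b\|^2$.

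The fix is to \emph{sum} rather than average, which is what the paper does. Choose an orthonormal basis $\{a^1,\dots,a^{k}\}$ of $V_d$ with $a=\|a\|a^1$; since $G$ acts on $V_d$ by unitary matrices and $*$ is antilinear, the element $x:=\sum_{i}a^i_{-d}(a^i)^*$ is independent of the choice of orthonormal basis, hence $G$-fixed, so $x\in U$. The sum of positive operators dominates each term, and Lemma \ref{productLemma} gives
\begin{equation*}
\|a_m b\|^2 \;\leq\; \|a\|^2\sum_{i}\left(b\,\middle|\,\left(a^i_{-d}(a^i)^*\right)_0 b\right) \;=\; \|a\|^2\,(b|x_0 b)\;\leq\;\|a\|^2\,\|x_0\|_n\,\|b\|^2
\end{equation*}
for all $b\in V_{\leq n}$ and $m\in\Zgeq$, after which your argument goes through verbatim. (Alternatively, your averaging idea can be salvaged: write $a=d_\pi\int_G\overline{\chi_\pi(g)}\,ga\,dg$ and apply Cauchy--Schwarz to get $\|a_0 b\|^2\leq d_\pi^2\int_G\|(ga)_0 b\|^2\,dg\leq d_\pi^2\,(b|x_0 b)$ with $x$ the Haar average; but this Cauchy--Schwarz step is precisely what is missing from your write-up, and without it the "average dominates" assertion is not true.)
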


\begin{proof}
Since $V$ is generated by the conformal vector $\nu$ and by primary vectors (see, e.g. , \cite[Remark 3.9]{CTW22}), then, by Lemma \ref{LemmaPrimary1}, it is enough to prove that for every primary vector $a \in V$ there is an $s \geq 0$ such that $\|a_0 (L_0+1_V)^{-s} \| < +\infty$, cf.\  \cite{CTW22}.
Now, if $a \in V_{d}$ is primary, there is an orthonormal basis $\{a^1,a^2, \dots, a^{k_d}  \}$ of $V_d$ with $a= \|a\| a^1$. 
Then, $x:= \sum_{i=1}^{k_d}a^i_{-d}{a^i}^* \in U$ because $g\restriction_{V_d}$ is represented by a unitary matrix in any orthonormal basis.  Since $U$ is an energy-bounded unitary subalgebra of $V$, then $x$ satisfies energy bounds and hence there is a real number  $s \geq 0$ such that 
$\|x_0 (L_0+1_V)^{-2s} \| < +\infty$. By Lemma \ref{productLemma} and the fact that $x_0$ commutes with $L_0$ we find 
\begin{eqnarray*} 
\| a_0 (L_0 + 1_V)^{-s} b \|^2 &=& \|a \|^2    \| a^1_0  (L_0 + 1_V)^{-s}b \|^2  \\ 
&\leq&    \|a\|^2 \sum_{i=1}^{k_d}   \| a^i_0(L_0 + 1_V)^{-s}b \|^2        \\
&\leq& \|a\|^2 \sum_{i=1}^{k_d}   ((L_0 + 1_V)^{-s}b| ( a^i_{-d}{a^i}^*)_0 (L_0 + 1_V)^{-s}b) \\
&=&    \|a\|^2 ((L_0 + 1_V)^{-s}b|x_0 (L_0 + 1_V)^{-s} b) \\
&=&    \|a\|^2 (b|x_0 (L_0 + 1_V)^{-2s} b) \\
&\leq &  \|a\|^2 \|x_0 (L_0+1_V)^{-2s} \|  \|b\|^2 ,
\end{eqnarray*}
for all $b \in V$. Hence, $\|a_0(L_0+1_V)^{-s}\|^2 \leq \|a\|^2 \|x_0 (L_0+1_V)^{-2s} \| < +\infty$ and the conclusion follows.
\end{proof}

As an immediate application, we give a new proof of energy-boundedness of lattice models, see 
\cite[Section 5.3]{Gui21a}.

\begin{prop} Let $L$ be an even positive-definite lattice. Then the lattice vertex operator algebra $V_L$ is energy-bounded. 
\end{prop}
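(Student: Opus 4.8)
The plan is to realize $V_L$ as a vertex operator algebra extension of a suitable energy-bounded orbifold and then invoke Theorem \ref{VGtheorem}. Recall that $V_L$ contains the Heisenberg (free boson) vertex operator algebra $M$ generated by $\mathfrak{h} := \CC \otimes_{\ZZ} L \subset (V_L)_1$, with the same conformal vector: indeed the standard conformal vector of $V_L$ is the Segal--Sugawara vector built from an orthonormal basis of $\mathfrak{h}$, so $\nu \in M$. The Heisenberg subalgebra $M$ is a unitary subalgebra (it is $U_{\mathscr{F}}$ for $\mathscr{F}$ a Hermitian basis of $(V_L)_1$, which is a family of Hermitian quasi-primary vectors), and it is energy-bounded as a simple unitary vertex operator algebra by Proposition \ref{boundV1Prop} together with Proposition \ref{bound_generated_prop}. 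However, $M$ is not an orbifold of $V_L$ by a compact group in general, so one extra step is needed.

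The key point is that $M = V_L^{T}$ where $T := \operatorname{Hom}(L, \Uone)$ is the (compact) torus acting on $V_L$ by the automorphisms that multiply the vertex operator $Y(e_\alpha, z)$ (for $\alpha \in L$) by the character value, i.e.\ $\chi \cdot e_\alpha = \chi(\alpha) e_\alpha$ and $\chi$ acts trivially on $\mathfrak{h}$. Concretely, $V_L = \bigoplus_{\alpha \in L} M \otimes e_\alpha$ as an $M$-module, the $\alpha$-th summand is precisely the joint eigenspace on which $\chi \in T$ acts by $\chi(\alpha)$, and the joint fixed-point space is exactly the $\alpha = 0$ summand, which is $M$. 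These automorphisms are unitary (they preserve the standard scalar product on $V_L$, since distinct lattice summands are orthogonal and $T$ acts by phases), so $T$ is a compact group of unitary automorphisms of $V_L$ with $V_L^{T} = M$.

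First I would record that $M$ is a unitary, energy-bounded subalgebra of $V_L$ with the same conformal vector, citing Proposition \ref{boundV1Prop} and Proposition \ref{bound_generated_prop}. Next I would define the torus action of $T = \operatorname{Hom}(L,\Uone)$ as above and check: (i) each $\chi$ is a vertex operator algebra automorphism fixing $\nu$ (immediate from the defining relations of $V_L$, since the cocycle and the $M$-action are untouched and the grading by $L$ is multiplicative under the normally ordered products), (ii) each $\chi$ is unitary, and (iii) $V_L^{T} = M$. Finally, $T$ is compact (a finite-dimensional torus, $L$ being of finite rank), so Theorem \ref{VGtheorem} applies with $G = T$ and $U = M = V_L^{T}$, yielding that $V_L$ is energy-bounded.

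The main obstacle is mostly bookkeeping rather than a genuine difficulty: one must pin down the precise construction of $V_L$ and its unitary structure (which convention for the $2$-cocycle on $L$, normalization of the $e_\alpha$) well enough to verify that the torus acts by \emph{unitary} automorphisms and that the fixed-point algebra is exactly the Heisenberg $M$ rather than some larger sublattice orbifold; with the standard normalized invariant form on $V_L$ this is routine, since the decomposition $V_L = \bigoplus_{\alpha} M\otimes e_\alpha$ is orthogonal and $T$ acts by scalars of modulus one on each summand. Once that is in place, energy-boundedness of $V_L$ is an immediate consequence of Theorem \ref{VGtheorem}.
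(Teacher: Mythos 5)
Your proposal is correct and follows essentially the same route as the paper: both identify the rank-$n$ Heisenberg subalgebra of $V_L$ as the fixed-point subalgebra of a compact torus of unitary automorphisms, note that it is energy-bounded via Proposition \ref{boundV1Prop}, and conclude by Theorem \ref{VGtheorem}. Your writeup merely spells out the torus $T=\operatorname{Hom}(L,\Uone)$ and the verification $V_L^{T}=M$ in more detail than the paper does.
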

\begin{proof}
If $n$ is the rank of $L$ then the rank $n$ Heisenberg algebra is a unitary subalgebra of $V_L$ which is energy-bounded by 
Proposition \ref{boundV1Prop}.
The automorphism group of $V_L$ contains a compact subgroup $G$ of unitary automorphisms isomorphic to the $n$-dimensional torus $\mathbb{T}^n$  such that $H=V_L^G$ and the conclusion follows from Theorem \ref{VGtheorem}.
\end{proof} 

We now to discuss our second main result on energy bounds for vertex operator algebras extensions.  We start with two lemmas. 

\begin{lem} \label{LemmaPrimary2} Let $V$ be a simple unitary vertex operator algebra.  Let $b \in V$ satisfy energy bounds and let $a \in V$ be a primary vector with conformal dimension $d_a \neq 1$. 
Then there are real numbers $B \geq 0$ and $t \geq 0$ such that
$$\|a_{-m}b_m\|_n \leq B(1+|m|)^t(1+|n|)^t (\|a_0\|_{n-m} + \|a_0\|_{n} ) $$
for all $m, n \in \ZZ$.
\end{lem}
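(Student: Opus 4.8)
The plan is to bound $\|a_{-m}b_m\|_n$ by inserting the estimate for $\|a_{-m}\|_k$ coming from Lemma \ref{LemmaPrimary1} together with the energy bound hypothesis on $b$. Concretely, recall that $b_m V_{\leq n} \subset V_{\leq n-m}$, so $\|a_{-m}b_m\|_n \leq \|a_{-m}\|_{n-m}\,\|b_m\|_n$. Since $b$ satisfies energy bounds there are constants $C_b \geq 0$, $s_b \geq 0$ and $t_b \geq 0$ with $\|b_m\|_n \leq C_b(1+|m|)^{t_b}(1+|n|)^{s_b}$ for all $m,n \in \ZZ$. For the factor $\|a_{-m}\|_{n-m}$, apply the first inequality of Lemma \ref{LemmaPrimary1} with $m$ replaced by $-m$ and $n$ replaced by $n-m$: there is $A \geq 0$ with
$$\|a_{-m}\|_{n-m} \leq A\sqrt{1+|m|}\,(1+|n-m|)\,(\|a_0\|_{n-m} + \|a_0\|_{n-m+m}) = A\sqrt{1+|m|}\,(1+|n-m|)\,(\|a_0\|_{n-m} + \|a_0\|_{n}).$$

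Next I would combine the two displays. Using $1+|n-m| \leq (1+|n|)(1+|m|)$ and absorbing all the polynomial prefactors, we get
$$\|a_{-m}b_m\|_n \leq A C_b (1+|m|)^{\frac12 + 1 + t_b}(1+|n|)^{1 + s_b}(\|a_0\|_{n-m} + \|a_0\|_{n}),$$
so taking $B := AC_b$ and $t := \max\{\tfrac32 + t_b,\ 1 + s_b\}$ (or any integer at least this large) yields exactly the claimed inequality $\|a_{-m}b_m\|_n \leq B(1+|m|)^t(1+|n|)^t(\|a_0\|_{n-m} + \|a_0\|_{n})$ for all $m,n \in \ZZ$. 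The cases $m = 0$ and negative $m$ are handled uniformly by the same bound since Lemma \ref{LemmaPrimary1} is stated for all integers.

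There is essentially no serious obstacle here: the statement is a bookkeeping consequence of Lemma \ref{LemmaPrimary1} applied to $a_{-m}$, the submultiplicativity $\|R_1R_2\| \leq \|R_1\|\|R_2\|$ together with the grading shift $b_m V_{\leq n}\subset V_{\leq n-m}$, and the hypothesis that $b$ is energy-bounded. The only point requiring a little care is to keep track of which filtration level each operator sees — in particular that $b_m$ maps $V_{\leq n}$ into $V_{\leq n-m}$, so that it is $\|a_{-m}\|_{n-m}$ and not $\|a_{-m}\|_n$ that multiplies $\|b_m\|_n$ — and then to verify that the resulting arguments $n-m$ and $n$ of $\|a_0\|$ match the two terms in Lemma \ref{LemmaPrimary1} after the substitution. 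I do not expect the exponent $t$ to be optimal, but the lemma only asserts existence of some such $t$, so a crude common bound on all the exponents suffices.
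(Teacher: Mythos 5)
Your proof is correct and follows essentially the same route as the paper's: factor $\|a_{-m}b_m\|_n \leq \|a_{-m}\|_{n-m}\|b_m\|_n$ using the grading shift, apply Lemma \ref{LemmaPrimary1} with $m\mapsto -m$, $n\mapsto n-m$ (noting the arguments of $\|a_0\|$ come out as $n-m$ and $n$), invoke the energy bound on $b$, and absorb $(1+|n-m|)\leq(1+|m|)(1+|n|)$ into the polynomial prefactors. The bookkeeping of the filtration indices is exactly as in the paper, so nothing further is needed.
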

\begin{proof}
By assumption $b$ satisfies energy bounds and thus there are real numbers $K  \geq 0$ and $q \geq 0$ such that  
$\| b_m \|_n \leq K(1 + |m|)^q(1+|n|)^q$ for all  $m, n \in \ZZ$. It then follows from Lemma \ref{LemmaPrimary1} that there is a real number 
$A\geq 0$ such that
\begin{eqnarray*}
\|a_{-m}b_m\|_n &\leq &   \|a_{-m} \|_{n-m} \|b_m\|_n  \\
&\leq&  A \sqrt{1 + |m|} (1+|n-m|) (\|a_0\|_{n-m}  + \|a_0\|_{n}) K(1 + |m|)^q(1+|n|)^q  \\
&\leq& AK (1 + |m|)^{q +\frac32}(1+|n|)^{q + 1}  (\|a_0\|_{n}  + \|a_0\|_{n-m})
\end{eqnarray*}
for all $m,n \in \ZZ$, and the conclusion follows.
\end{proof}

\begin{lem} \label{LemmaPrimary3} Let $V$ be a simple unitary vertex operator algebra.  Let $b \in V_d$ satisfy energy bounds, let $p \in \Zgeq$ 
 and let $a \in V$ be a primary vector with conformal dimension $d_a \neq 1$.
 Then there are real numbers $C\geq 0$ and $r \geq 0$ such that
$$\|(b_{-p}a)_0\|_n \leq C (1+|n|)^r \|a_0\|_{n+d}  $$
for all $n \in \ZZ$.
\end{lem}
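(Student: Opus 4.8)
The plan is to expand the operator $(b_{-p}a)_0$ by means of the Borcherds identity in the form of Equation (\ref{B-id_hom2}) and then to estimate the resulting sum term by term, using the energy bounds for $b$ together with Lemma \ref{LemmaPrimary1} to bound the modes of $a$ in terms of $a_0$.

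First I would apply Equation (\ref{B-id_hom2}) with the roles of $a$ and $b$ interchanged, with $d_a$ replaced by $d$, with $n$ replaced by $-p$ and with $k$ replaced by $0$, obtaining
\begin{equation*}
(b_{-p}a)_0 = \sum_{j=0}^{+\infty}(-1)^j\binom{d-p-1}{j}\left( b_{-p-j}a_{p+j} + (-1)^{d-p}a_{d-1-j}b_{j+1-d}\right).
\end{equation*}
Restricting this operator to $V_{\leq n}$ makes the sum finite: in the first family of terms $a_{p+j}V_{\leq n}\subset V_{\leq n-p-j}$ vanishes as soon as $j>n-p$, while in the second family $b_{j+1-d}c=b_{(j)}c$ vanishes on $V_{\leq n}$ as soon as $j\geq n+d$ because $V$ is of CFT-type. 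Moreover $|\binom{d-p-1}{j}|$ is bounded by a polynomial in $j$ whose degree depends only on $p$ and $d$, hence by a polynomial in $n$ on the relevant range $0\leq j\leq n+d$.

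Next I would estimate each surviving summand. For the first family, $\|b_{-p-j}a_{p+j}\|_n\leq\|b_{-p-j}\|_{n-p-j}\|a_{p+j}\|_n$; the energy bounds for $b$ bound $\|b_{-p-j}\|_{n-p-j}$ by a polynomial in $j$ and $n$, and Lemma \ref{LemmaPrimary1} (applicable since $d_a\neq 1$) gives $\|a_{p+j}\|_n\leq A\sqrt{1+p+j}\,(1+|n|)(\|a_0\|_n+\|a_0\|_{n-p-j})\leq 2A\sqrt{1+p+j}\,(1+|n|)\|a_0\|_n$, using that $m\mapsto\|a_0\|_m$ is increasing and $n-p-j\leq n$. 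For the second family, $\|a_{d-1-j}b_{j+1-d}\|_n\leq\|a_{d-1-j}\|_{n+d-1-j}\|b_{j+1-d}\|_n$; again the energy bounds for $b$ control $\|b_{j+1-d}\|_n$ polynomially, and Lemma \ref{LemmaPrimary1} gives $\|a_{d-1-j}\|_{n+d-1-j}\leq A\sqrt{1+|d-1-j|}\,(1+|n+d-1-j|)(\|a_0\|_{n+d-1-j}+\|a_0\|_n)$, where I have used the crucial cancellation $(n+d-1-j)-(d-1-j)=n$. On the relevant range $0\leq j\leq n+d-1$ one has $0\leq n+d-1-j\leq n+d-1$, so both $\|a_0\|_{n+d-1-j}$ and $\|a_0\|_n$ are at most $\|a_0\|_{n+d}$, and $1+|n+d-1-j|\leq 1+n+d$.

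Finally, summing the $O(n)$ surviving terms, each of which is bounded by a polynomial in $n$ times $\|a_0\|_{n+d}$, gives $\|(b_{-p}a)_0\|_n\leq C(1+|n|)^r\|a_0\|_{n+d}$ for suitable $C\geq 0$ and $r\geq 0$ and all $n\geq 0$; for $n<0$ the left-hand side is $0$ and there is nothing to prove. I expect the only delicate point to be the bookkeeping: one has to check that every index level at which $\|a_0\|$ gets evaluated stays below $n+d$ — which is exactly what the cancellation $(n+d-1-j)-(d-1-j)=n$ and the CFT-type truncation of the second sum provide — and that the number of summands and the size of the binomial coefficients grow only polynomially in $n$.
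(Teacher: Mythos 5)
Your proposal is correct and follows essentially the same route as the paper's proof: expand $(b_{-p}a)_0$ via Equation (\ref{B-id_hom2}), truncate both sums using $a_{p+j}V_{\leq n}\subset V_{\leq n-p-j}$ and the CFT-type condition, and control the surviving terms with the energy bounds for $b$ together with Lemma \ref{LemmaPrimary1}, including the key cancellation that keeps every evaluation of $\|a_0\|$ at level at most $n+d$. The only cosmetic difference is that you inline the estimate for $a_{d-1-j}b_{j+1-d}$ which the paper delegates to Lemma \ref{LemmaPrimary2}.
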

\begin{proof}
It follows from Equation (\ref{B-id_hom2})) that
 
\begin{equation}
(b_{-p}a)_0 = \sum_{j=0}^{+\infty} (-1)^j \binom{d - p -1}{j} \left(b_{-p - j}a_{p +j} + (-1)^{d -p} 
a_{d -1 -j}b_{j+1-d} \right).
\end{equation}    

By assumption $b$ satisfies energy bounds and thus there are real numbers $K \geq 0$ and $q \geq 0$ such that  
$\| b_m \|_n \leq K(1 + |m|)^q(1+|n|)^q$ for all  $m, n \in \ZZ$.

If $c \in V_{\leq n}$ then  $a_{p +j}c =0$ for $j + p >  n $,   so that  $\|b_{-p - j}a_{p +j} \|_n =0 $ for $j + p >  n $. 
Similarly, $b_{j+1-d}c =0$ for $j +1 -d > n$. It follows that 
 $\|a_{d -1 -j}b_{j+1-d}\|_n =0$ for $j +1-d > n$.   

By Lemma \ref{LemmaPrimary2} there exist positive real numbers $B$ and $t$ such that
\begin{eqnarray*}
\|a_{d -1 -j}b_{j+1-d}\|_n &\leq&  B (1+ |j+1-d|)^{t}(1 + |n|)^{t} (\|a_0\|_{n + d -1 -j}  +  \|a_0\|_{n}  )  \\
&\leq& 2B (1+ |j+1-d|)^{t}(1 + |n|)^{t}    \|a_0\|_{n + d} 
\end{eqnarray*}
for all $n \in \ZZ$ and all $j\in \Zgeq$. Recalling that $\|a^i_{d -1 -j}b_{j+1-d}\|_n =0$ for $j+1-d >n$, we see that
$$
\|a_{d -1 -j}b_{j+1-d}\|_n \leq 2B(1 + |n|)^{2t}   \|a_0\|_{n + d} $$
for all $n \in \ZZ$ and all $j\in \Zgeq$. On the other hand, 
$$
\| b_{-p - j}a_{p +j} \|_n \leq  K(|p+j| +1 )^q (1+ |n-p-j|)^q \| a_{p +j}  \|_n 
$$
for all $p,j \in \Zgeq$ and all $n\in \ZZ$.
By Lemma \ref{LemmaPrimary1} there exists a positive real number $A$ such that 
$$
 \|a_{p +j}  \|_n \leq 2A \sqrt{1 + |p + j|} (1+|n|) \|a_0\|_n
 $$
 for all $j \in \Zgeq$ and all $n\in \ZZ$. Hence,  recalling that $\| b_{-p - j}a_{p +j} \|_n =0$ for $p+j >n$, we find that 
 $$
\| b_{-p - j}a_{p +j} \|_n \leq  2KA (|n| +1 )^{2q + \frac32}\| a_{0}  \|_n 
$$
for all $j \in \Zgeq$ and all $n\in \ZZ$.  It follows that 
\begin{eqnarray*}
\|(b_{-p}a)_0\|_n &\leq& \left( 2KA (|n| +1 )^{2q + \frac32}\| a_{0}  \|_n  + 2B(1 + |n|)^{2t}   \|a_0\|_{n + d} \right)    
\sum_{j=0}^{n+d} \left| \binom{d - p -1}{j} \right| \\
&\leq& (2KA + 2B)(|n| + 1)^{2q + 2t + \frac32}   \|a_0\|_{n + d} \sum_{j=0}^{n+d} (|d-p -1| +j)^{|d-p -1|}   \\
&\leq& (2KA + 2B)(|n| + 1)^{2q + 2t + \frac32} (|n|+ p +2d +1)^{d+p+2}  \|a_0\|_{n + d} 
\end{eqnarray*}
for all $n\in \ZZ$ and the conclusion follows.
\end{proof}

\begin{theo} 
\label{main_theorem}
Let $V$ be simple unitary exponentially energy-bounded vertex operator algebra and let $U \subset V$ be an energy-bounded  unitary subalgebra of $V$ such that $V$ is a finite direct sum of simple $U$-modules.  
Then $V$ is energy-bounded. 
\end{theo}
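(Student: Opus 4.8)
\emph{Step 1: reduction to primary vectors.}
By \cite[Remark 3.9]{CTW22} the vertex operator algebra $V$ is generated by $\nu$ together with its primary vectors, and since $a^*$ is primary whenever $a$ is, writing $a=\tfrac12(a+a^*)+i\cdot\tfrac1{2i}(a-a^*)$ shows that the set $\mathscr{F}$ consisting of $\nu$ and of all \emph{Hermitian} primary vectors of $V$ is a Hermitian family of quasi-primary vectors with $U_{\mathscr{F}}=V$. By Proposition \ref{bound_generated_prop} it therefore suffices to prove that every Hermitian primary vector of $V$ satisfies energy bounds. For a primary $a\in V_d$ the case $d=0$ is trivial ($V$ is of CFT-type) and the case $d=1$ is Proposition \ref{boundV1Prop}; for $d\geq 2$, Lemma \ref{LemmaPrimary1} reduces the claim to the existence of some $s\geq 0$ with $\|a_0(L_0+1_V)^{-s}\|<+\infty$, i.e.\ to a \emph{polynomial} estimate $\|a_0\|_n=O(n^s)$. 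Since $V$ is exponentially energy-bounded we already have $\|a_0\|_n=O(q^{-n})$ for some $q\in(0,1)$, so the whole problem is to upgrade an exponential bound on a single zero mode to a polynomial one; note that by Lemma \ref{productLemma} it is in fact enough to bound $\|(a_{-d}a^*)_0\|_n$ polynomially, since $\|a_0\|_n^2\leq\|(a_{-d}a^*)_0\|_n$.

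\emph{Step 2: using the finite $U$-module decomposition.}
Fix a decomposition $V=\bigoplus_{i=1}^{N}W_i$ into simple $U$-modules, with orthogonal projections $e_i$; these commute with every mode $u_{(m)}$, $u\in U$, and one of the summands is $U$ itself. The plan is to exploit that every vector of $V$ is a $U$-module descendant of the finitely many lowest conformal-weight vectors of the $W_i$: via the iterate (associativity) formula each $Y(v,z)$ is, up to a combination of \emph{bounded} length depending only on $v$, built from modes $u^{(1)}_{(m_1)}\cdots u^{(r)}_{(m_r)}$ with $u^{(k)}\in U$ (polynomially bounded because $U$ is energy-bounded) composed with the fields attached to those generating vectors, and Lemmas \ref{LemmaPrimary2}, \ref{LemmaPrimary3} and Proposition \ref{bound_generated_prop} then transport polynomial bounds on the zero modes of the generators to all of $V$, hence in particular to $(a_{-d}a^*)_0$. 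To bound a generating-vector zero mode $c_0$ one restricts it between the summands, $e_jc_0e_i\colon W_i\to W_j$, which is a mode of the intertwining operator $e_jY(c,\cdot)e_i$, and expands it by commuting with the $U$-modes used to produce $W_i$ from its lowest-weight space: by the Borcherds commutator formula, moving a $U$-mode past $Y(c,z)$ produces only the finitely many terms $Y(u_{(k)}c,z)$ with $u_{(k)}c$ again in the fixed summand containing $c$, so the recursion stays inside a finite family of intertwining operators and a finite family of channels. Combining these identities with the exponential bound — which provides the (finite, not yet polynomial) base estimate for $c_0$ on each finite-dimensional lowest-weight space — and with the uniform polynomial bounds on the $U$-modes yields $\|c_0\|_n=O(n^s)$, and Step 1 then gives the theorem.

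\emph{Main obstacle.}
The delicate point is the bookkeeping in Step 2. If one only uses that $V$ is generated over $U$, the estimate loses a factor growing like a power of $n$ at \emph{each} of the $\sim n$ generation steps, and hence merely reproduces an exponential bound; thus the finiteness of the decomposition $V=\bigoplus_{i=1}^{N}W_i$ must be used in an essential way to keep the number of intertwining channels, the length of the relevant recursions, and the size of the correction terms bounded independently of $n$, while the exponential energy bound is needed precisely to start the recursion with finite constants. Carrying this out rigorously — in particular controlling the modes of the finitely many intertwining operators $e_jY(c,\cdot)e_i$ attached to the lowest-weight vectors of the $W_i$, uniformly in the conformal weight — is the technical heart of the argument.
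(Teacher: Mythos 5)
Your Step 1 matches the paper's reduction, and your instinct to control $\|(a_{-d}a^*)_0\|_n$ via Lemma \ref{productLemma} and to expand elements of $V$ as $U$-descendants of the finitely many top vectors of the simple summands is exactly the right setup. But the recursion you sketch in Step 2 does not close, and you essentially concede this yourself in the ``main obstacle'' paragraph: commuting $U$-modes past an intertwining operator, level by level, requires a number of steps growing with $n$, each contributing a polynomial factor, so the method at best reproduces an exponential bound. Keeping the number of channels finite (which the finite decomposition does give you) does not fix this, because the loss is per commutation step, not per channel. Your proposed use of the exponential energy bound --- as a ``base estimate on the finite-dimensional lowest-weight spaces'' --- is also not the right role for that hypothesis; boundedness on finite-dimensional spaces is automatic and needs no assumption.

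The missing idea is a quadratic self-improvement (bootstrap) inequality. Set $K(n):=\sup_k\|a^k_0\|_n$ over unit top vectors $a^k$ of the summands (after enlarging $U$ to contain $V_1$, so all nontrivial lowest weights satisfy $d_k\neq 1$). By \cite[Prop.\ 4.5.6]{LL04} one writes $a^k_{(-1)}(a^k)^*=\sum_{i,m}b^{(k,i,m)}_{d_i-2d_k}a^i$ with finitely many homogeneous $b^{(k,i,m)}\in U$; Lemma \ref{LemmaPrimary3} then bounds the zero mode of each term by $C(1+n)^r\|a^i_0\|_{n+d}$, i.e.\ \emph{linearly} in $K(n+d)$, while Lemma \ref{productLemma} gives $K(n)^2\leq\|(a^k_{(-1)}(a^k)^*)_0\|_n$ on the left. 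The resulting inequality $K(n)^2\leq D(1+n)^sK(n+d)$, rewritten as $\alpha_n^2\leq\alpha_{n+d}$ for $\alpha_n:=K(n)/(D(1+d)^s(1+n)^s)$, forces $\alpha_n\leq 1$ for all $n$: otherwise $\alpha_{\overline n+md}\geq\alpha_{\overline n}^{2^m}$ grows doubly exponentially, contradicting the exponential energy bound (this is the actual, and essential, use of that hypothesis). This single inequality replaces your entire level-by-level recursion and is what converts the a priori exponential control into the polynomial bound $K(n)=O((1+n)^s)$. Without it your argument has a genuine gap at precisely the point you identify as the technical heart.
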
 

\begin{proof} Let $\tilde{U}$ be the unitary subalgebra of $V$ generated by $U$ and $V_1$. By Proposition \ref{boundV1Prop} every vector in $V_1$ satisfies energy bounds and $U$ is an energy-bounded unitary subalgebra of $V$ by assumption. Therefore, Proposition \ref{bound_generated_prop} implies that $\tilde{U}$ is an energy-bounded unitary subalgebra too.  Since $U \subset \tilde{U}$ then $V$ is a finite direct sum of  irreducible $\tilde{U}$-modules. Hence, we can assume that $V_1 \subset U$ by replacing if necessary $U$ with $\tilde{U}$. We write $V= \bigoplus_{k=1}^{N}M^k $
where the $M^k$, $k=1,\dots,N$, are pairwise orthogonal simple unitary $U$-modules and $M^1=U$. We denote by $M^k_{(0)}$ the top space of $M^k$, i.e. the lowest energy subspace of $M^k$ and by $d_k$ the corresponding lowest energy. Since we are assuming 
that $V_1 \subset U$ we have that $d_k \neq 1$ for all $k \in \{1,\dots,N\}$.

For each $k=1,\dots,N$, pick a vector $a^k \in M^k_{(0)}$ with 
$\|a^k\| = 1$. Accordingly, each $a^k$ is primary of dimension $d_k$ and $V$ is generated by $U \cup \{a^1, \dots , a^N\}$. 
It is thus enough to prove that for every $a^k$ there is a real number  $s>0$ such that $\|a^k_0 (L_0 + 1_V)^{-s}\| < +\infty$.
For every integer $n  \in \Zgeq $ we set 
$$K(n) := \sup_{k} \| a^k_0 \|_n \,. $$
By \cite[Prop. 4.5.6]{LL04}, for any $k \in \{1,\dots,N\}$, there are homogeneous vectors $b^{(k,i,m)} \in  U_{d(k,i,m)}$, $i=1,\dots,N$, 
$m=1,\dots, m(i,k)$  such that

\begin{equation}
a^k_{(-1)}(a^k)^* = \sum_{i=1}^N \sum_{m=1}^{m(i,k)} b^{(k,i,m)}_{d_i-2d_k}a^i
\end{equation}
 so that
 \begin{equation}
 \left(a^k_{(-1)}(a^k)^*\right)_0 = \sum_{i=1}^n \sum_{m=1}^{m(i,k)} \left(b^{(k,i,m)}_{d_i - 2d_k}a^i \right)_0 \,.
 \end{equation}

There is a finite number of vectors $b^{(k,i,m)}$ and these vectors are in $U$ and hence they are energy-bounded. Moreover, 
if $d_i -2d_k >0$ then $b^{(k,i,m)}_{d_i - 2d_k}a^i =0$, because it is a homogeneous vector in $M^i$ with conformal energy less than 
$d_i$. Thus,  it follows from Lemma \ref{LemmaPrimary3} that
 there is an integer $d\geq 0$ and  real numbers
$D > 0$,  $s \geq	0$ such that, for all $k$ and all $n \in \Zgeq$, 
$$\|\left(a^k_{(-1)}(a^k)^* \right)_0\|_n \leq D(n+1)^s K(n+ d)  \, .  $$ 
Thus, it follows by Lemma \ref{productLemma} that
$$K(n)^2 \leq D(n+1)^{s}K(n+d)$$
for all $n \in \Zgeq$. We now set 
$$\alpha_n := \frac{K(n)}{D(1+d)^s (1+n)^s}$$
and we find  $\alpha_n^2 \leq \alpha_{n+d}$. If $\alpha_{\overline{n}} > 1$ for some   $\overline{n}$ then
$\alpha_{\overline{n}+ md} \geq (\alpha_{\overline{n}})^{2^m}$ for all $m \in \Zgeq$ and this is in contradiction with the assumption that $V$ is exponentially energy-bounded. Hence, $\alpha_n \leq 1$ for all $n$ and hence $K(n) \leq C(1+r)^q (1+n)^q$ so that $\|a^k_0 (L_0+ 1_V)^{-q}\| < +\infty$ for all $k$ and 
$V$ is energy-bounded.
\end{proof}

\begin{theo} \label{regularextension_theorem} 
Let $V$ be a simple unitary vertex operator algebra and let $U$ be an energy-bounded unitary subalgebra of $V$ with the same conformal vector. Assume that  $U$ is also a regular vertex operator algebra. Then $V$ is energy-bounded. 
\end{theo}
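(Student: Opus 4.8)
The plan is to deduce Theorem \ref{regularextension_theorem} from Theorem \ref{main_theorem}, so the task reduces to verifying that the two hypotheses of Theorem \ref{main_theorem} hold under the assumption that $U$ is a regular unitary subalgebra with the same conformal vector. First I would recall what regularity buys us: a regular vertex operator algebra is rational and $C_2$-cofinite (this is Li's theorem, and in any case is standard and citable), and in a rational vertex operator algebra every (admissible, and here by unitarity, ordinary) module is completely reducible into a finite direct sum of simple modules. Since $U$ has the same conformal vector as $V$, the operator $L_0$ acts on $V$ with the same grading whether we think of $V$ as a $V$-module or as a $U$-module, so $V$ is an ordinary $U$-module; by rationality it decomposes as a finite direct sum of simple $U$-modules. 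That establishes the second hypothesis of Theorem \ref{main_theorem}.

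Next I would establish that $V$ is exponentially energy-bounded. The natural route is Proposition \ref{proposition_exponentially_bounded}, which says a $C_2$-cofinite simple unitary vertex operator algebra is exponentially energy-bounded; but that proposition is about $V$ itself being $C_2$-cofinite, whereas here it is $U$ that is regular (hence $C_2$-cofinite). So the key point to argue is that $V$ inherits exponential energy-boundedness from the fact that it is a finite direct sum of simple $U$-modules and $U$ is $C_2$-cofinite. Concretely, for a $C_2$-cofinite $U$, each simple $U$-module $M$ has a graded trace $\operatorname{Tr}_M q^{L_0}$ with finite radius of convergence issues controlled by Zhu-type finiteness, so $\operatorname{Tr}_M((x_{-d}x^*)_0 q^{2L_0}) < +\infty$ for homogeneous $x \in V$ and $q \in (0,1)$ (invoking \cite[Theorem 4.4.1]{Zhu96}, or its $C_2$-cofinite generalizations, applied to the finitely many simple $U$-modules appearing in $V$). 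Then the same computation as in the proof of Proposition \ref{proposition_exponentially_bounded}, using Lemma \ref{productLemma} and summing over the finitely many summands $M^k$, gives $\|x_0 q^{L_0}\|^2 \leq \sum_k \operatorname{Tr}_{M^k}((x_{-d}x^*)_0 q^{2L_0}) < +\infty$ for all homogeneous $x\in V$, hence $V$ is exponentially energy-bounded.

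With both hypotheses of Theorem \ref{main_theorem} in hand — $U$ an energy-bounded unitary subalgebra (given) and $V$ a finite direct sum of simple $U$-modules which is exponentially energy-bounded — the conclusion that $V$ is energy-bounded follows immediately. So the proof structure is: (1) regular $\Rightarrow$ rational and $C_2$-cofinite; (2) rational $\Rightarrow$ $V$ (an ordinary $U$-module, as the conformal vectors agree) is a finite direct sum of simple $U$-modules; (3) $C_2$-cofinite $U$ plus finiteness of the decomposition $\Rightarrow$ $V$ exponentially energy-bounded, via the trace argument; (4) apply Theorem \ref{main_theorem}.

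The main obstacle is step (3): one needs the trace finiteness $\operatorname{Tr}_{M}((x_{-d}x^*)_0 q^{2L_0}) < +\infty$ for the simple $U$-modules $M = M^k$ occurring in $V$, not merely for $U$ itself, so the cited $C_2$-cofiniteness consequence (\cite[Theorem 4.4.1]{Zhu96} or a module-theoretic refinement) must be applied to each $M^k$ as a $U$-module. This is legitimate because $C_2$-cofiniteness of $U$ is precisely what guarantees convergence of such graded traces on arbitrary (admissible) $U$-modules, but it is the point that requires care rather than the routine reduction to Theorem \ref{main_theorem}. A minor secondary point is checking that the $U$-module structure on $V$ is indeed \emph{ordinary} (finite-dimensional graded pieces, lowest-weight), which follows since $V$ is a vertex operator algebra of CFT-type with the same $L_0$.
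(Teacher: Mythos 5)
Your overall strategy coincides with the paper's: verify the two hypotheses of Theorem \ref{main_theorem} (finite decomposition into simple $U$-modules from regularity, plus exponential energy-boundedness of $V$) and conclude. Step (1)--(2) of your outline is exactly what the paper does. The problem is your step (3), which is precisely where you yourself flag the "main obstacle," and the obstacle is not surmounted by the argument you sketch. You propose to get $\operatorname{Tr}_{M^k}\bigl((x_{-d}x^*)_0\, q^{2L_0}\bigr)<+\infty$ by applying Zhu's convergence theorem (or its $C_2$-cofinite refinements) to the simple $U$-modules $M^k$. But Zhu's theorem controls traces of zero modes $o(a)$ for $a$ \emph{in the algebra whose modules you are considering}; here the element being traced is $x_{-d}x^*$ with $x\in V$ homogeneous, and $x_{-d}x^*$ lies in $V$, not in $U$, in general. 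So $C_2$-cofiniteness of $U$ gives you no handle on this trace. Worse, for $x\in V$ the operator $(x_{-d}x^*)_0$ does not preserve the individual summands $M^k$ of the $U$-module decomposition, so the module-wise traces are not even well defined as written (only the compressed traces, whose sum is $\operatorname{Tr}_V$, make sense). Any attempt to bound $\operatorname{Tr}_{V_n}((x_{-d}x^*)_0)$ by $\dim V_n$ times an operator norm is circular, since a polynomial bound on $\|(x_{-d}x^*)_0\|_n$ is essentially the energy bound being sought.

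The paper closes this gap differently and more cheaply: since $U$ is $C_2$-cofinite, has the same conformal vector, and $V$ is a finitely generated $U$-module, $V$ is itself $C_2$-cofinite by \cite[Proposition 5.2]{ABD04}; then Proposition \ref{proposition_exponentially_bounded} applies directly to $V$ (now Zhu's theorem is invoked for $V$ as a module over itself, where the element $x_{-d}x^*$ legitimately lives), yielding exponential energy-boundedness. So the missing ingredient in your write-up is the transfer of $C_2$-cofiniteness from $U$ to $V$; once you add that citation, your step (3) collapses to an application of the already-proved Proposition \ref{proposition_exponentially_bounded} and the proof is complete.
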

\begin{proof} Since $U$ is regular and contains the conformal vector $\nu$ of $V$, then $V$ is a finite direct sum of simple $U$-module. Moreover, $V$ is $C_2$-cofinite by \cite[Proposition 5.2]{ABD04} and hence it is exponentially energy-bounded by Proposition \ref{proposition_exponentially_bounded}. Then the conclusion follows from Theorem \ref{main_theorem}
\end{proof}
Thanks to Theorem \ref{energy_bounded_subalgebra_theorem}   we get the following theorem.

 \begin{theo} 
\label{regularextension_theorem2}
 Let $V$ be a simple unitary vertex operator algebra and let $U$ be a unitary subalgebra of $V$ with the same conformal vector. Assume that $U$ is a regular vertex operator algebra and that  $U$ is generated by $U_1$, a family of Hermitian quasi-primary Virasoro vector and a family of Hermitian  $\mathcal{W}_3$ vectors. Then $V$ is energy-bounded.
\end{theo}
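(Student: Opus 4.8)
The plan is to obtain the statement by simply chaining the two preparatory results already in place, Theorem \ref{energy_bounded_subalgebra_theorem} and Theorem \ref{regularextension_theorem}; no new analysis is required, only a check that the hypotheses line up.

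First I would apply Theorem \ref{energy_bounded_subalgebra_theorem}. By hypothesis $U$ is a unitary subalgebra of $V$ generated by $U_1$, a family of Hermitian quasi-primary Virasoro vectors, and a family of Hermitian $\mathcal{W}_3$ vectors — which is precisely the generating data that Theorem \ref{energy_bounded_subalgebra_theorem} takes as input. (Under the hood this uses that every vector of $V_1$, every quasi-primary Hermitian Virasoro vector, and every Hermitian $\mathcal{W}_3$ vector satisfies energy bounds — Propositions \ref{boundV1Prop}, \ref{boundVirProp}, \ref{boundW3_proposition} — together with Proposition \ref{bound_generated_prop}, which upgrades energy bounds on a homogeneous generating set to energy-boundedness of the generated unitary subalgebra; note also that since $V$ is of CFT-type one has $L_1 V_1 \subseteq V_0 = \CC\Omega$ and in fact $L_1 V_1 = \{0\}$, so the $U_1$-part really does consist of quasi-primary vectors.) The conclusion of this first step is that $U$ is an energy-bounded unitary subalgebra of $V$.

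Second I would feed this into Theorem \ref{regularextension_theorem}. That theorem requires: $V$ simple unitary, $U$ an energy-bounded unitary subalgebra of $V$ with the same conformal vector, and $U$ a regular vertex operator algebra. The energy-boundedness of $U$ as a unitary subalgebra is exactly what the previous step produced, while the "same conformal vector" assumption and the regularity of $U$ are hypotheses of the present theorem. Hence Theorem \ref{regularextension_theorem} applies and yields that $V$ is energy-bounded, which is the claim. (Internally Theorem \ref{regularextension_theorem} uses that an extension of a regular VOA with the same conformal vector is $C_2$-cofinite and decomposes as a finite direct sum of simple $U$-modules, hence is exponentially energy-bounded by Proposition \ref{proposition_exponentially_bounded}, so that Theorem \ref{main_theorem} applies.)

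Accordingly there is essentially no obstacle to overcome here: the entire content of the theorem has been packaged into Theorem \ref{energy_bounded_subalgebra_theorem} and Theorem \ref{regularextension_theorem}, and the only thing to verify is the compatibility of hypotheses, which is immediate. The "hard part" was already done upstream — in the energy-bound estimates for $V_1$, Virasoro and $\mathcal{W}_3$ vectors, and in the induction of Theorem \ref{main_theorem} that turns exponential energy bounds plus a finite $U$-module decomposition into genuine polynomial energy bounds.
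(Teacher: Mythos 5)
Your proposal is correct and is exactly the paper's argument: the paper derives Theorem \ref{regularextension_theorem2} precisely by noting that Theorem \ref{energy_bounded_subalgebra_theorem} makes $U$ an energy-bounded unitary subalgebra and then invoking Theorem \ref{regularextension_theorem}. Your parenthetical checks (e.g.\ that $U_1\subset V_1$ is automatically primary and that Propositions \ref{boundVirProp}, \ref{boundV1Prop}, \ref{boundW3_proposition} and \ref{bound_generated_prop} are what power the first step) are accurate but not needed beyond what the cited theorems already package.
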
 
As a representative application we  have the following theorem.

\begin{theo}
\label{theoaffineextension} 
Let $V$ be a simple unitary vertex operator algebra extension with the same conformal vector of a unitary affine vertex operator algebra associated with a semisimple Lie algebra $\mathfrak{g}$.  Then $V$ is unitary and energy-bounded. 
 \end{theo}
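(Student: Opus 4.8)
The plan is to deduce the statement from Theorem \ref{regularextension_theorem2}. Decompose $\mathfrak{g} = \mathfrak{g}_1 \oplus \cdots \oplus \mathfrak{g}_r$ into simple ideals; then the affine vertex operator algebra in question is $U = L_{\widehat{\mathfrak{g}}_1}(k_1,0) \otimes \cdots \otimes L_{\widehat{\mathfrak{g}}_r}(k_r,0)$ for suitable positive integers $k_1,\dots,k_r$ (unitarity of $U$ forces each level to be a positive integer), realized inside $V$ as a vertex subalgebra with the same conformal vector. Each tensor factor is a regular (equivalently, rational and $C_2$-cofinite) simple unitary vertex operator algebra, hence so is the finite tensor product $U$.

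First I would establish that $V$ is unitary. A simple vertex operator algebra extension of a regular simple unitary vertex operator algebra with the same conformal vector corresponds to a connected \'etale (haploid commutative Frobenius) algebra in the unitary modular tensor category of $U$-modules; such an algebra is unitarizable and the associated extension is unitary, see \cite{CGGH,Gui21b} (cf.\ also \cite{CGH19}). Consequently $V$ is a simple unitary vertex operator algebra and $U$ is a unitary subalgebra of $V$ with the same conformal vector.

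Next I would verify the hypotheses of Theorem \ref{regularextension_theorem2}. On the one hand $U$ is regular. On the other hand $U$ is generated, as a vertex subalgebra, by its weight-one subspace $U_1 \cong \mathfrak{g}$: this is the familiar fact that an affine vertex operator algebra at positive integer level is generated by its degree-one part, the Sugawara conformal vector already lying in the subalgebra generated by $U_1$. Taking the families of Hermitian quasi-primary Virasoro vectors and of Hermitian $\mathcal{W}_3$ vectors to be empty, all hypotheses of Theorem \ref{regularextension_theorem2} hold, and it yields that $V$ is energy-bounded. (Alternatively, Theorem \ref{energy_bounded_subalgebra_theorem} shows directly that $U$ is an energy-bounded unitary subalgebra of $V$ since it is generated by $U_1$, and one may then invoke Theorem \ref{regularextension_theorem}.)

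I expect the only real obstacle to be the unitarity of $V$: the energy-bounds part is a straightforward reduction to the results already proved above, whereas this step genuinely relies on the categorical theory of unitary vertex operator algebra extensions. If the statement is read as already positing $V$ unitary, this difficulty evaporates and the proof is exactly the third paragraph.
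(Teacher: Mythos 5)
Your proposal is correct and follows essentially the same route as the paper: unitarity of $V$ is obtained from the categorical extension theory of \cite[Corollary 4.11]{CGGH}, regularity of the affine subalgebra $U$ from rationality and $C_2$-cofiniteness (the paper cites \cite{Zhu96} and \cite{ABD04}, which amounts to your factor-by-factor argument), and energy-boundedness from Theorem \ref{regularextension_theorem2} using that $U$ is generated by $U_1$. Your correct identification of the unitarity step as the one genuinely resting on external results matches how the paper handles it.
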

 \begin{proof} The unitarity of $V$ follows from \cite[Corollary 4.11]{CGGH}. Now, let $U \subset V$ be the unitary affine subalgebra  of $V$. Since the corresponding Lie algebra is semisimple, then $U$ is rational and $C_2$-cofinite, see 
 \cite[Section 5]{Zhu96} and hence it is regular by \cite[Theorem 4.5]{ABD04}. Moreover, $U$ is generated by $U_1$ and thus the conclusion follows from Theorem \ref{regularextension_theorem2}. 
 \end{proof}
 
 \begin{rem} If $\mathfrak{g}$ is of ADE type, then Theorem  \ref{theoaffineextension}  also follows from \cite[Theorem 2.7.4]{Gui20}.
 \end{rem}
 
 \begin{rem} The energy bounds for the vertex operator algebra extension of  unitary affine vertex operator algebras associated with  semisimple Lie algebras play an important role in the proof of strong locality for the holomorphic vertex operator algebras with central charge $c=24$ and non-zero weight-one subspace, see \cite[Section 5]{CGGH}.
 \end{rem}

 Another rather straightforward consequence of Theorem \ref{regularextension_theorem2}  is the following. 
 
 \begin{theo}
 \label{framed_theorem}
 Let $V$ be a simple unitary framed vertex operator algebra. Then $V$ is unitary and energy-bounded.
 \end{theo}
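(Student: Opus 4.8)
The plan is to realize $V$ as a regular vertex operator algebra extension, with the same conformal vector, of a tensor power of the Ising vertex operator algebra $L(1/2,0)$, and then to invoke Theorem \ref{regularextension_theorem2}. Recall that $V$ being framed means that there is a \emph{Virasoro frame}: mutually commuting Virasoro vectors $e^1,\dots,e^n\in V_2$, each of central charge $1/2$, each generating a subalgebra of $V$ isomorphic to the unitary Virasoro minimal model vertex operator algebra $L(1/2,0)$, and with $e^1+\cdots+e^n=\nu$. Let $T$ denote the vertex subalgebra of $V$ generated by $e^1,\dots,e^n$; then $T\cong L(1/2,0)^{\otimes n}$ and, since $\nu=\sum_{i}e^i\in T$, the subalgebra $T$ is $\ZZ$-graded, so as soon as $T$ is shown to be a unitary subalgebra of $V$ it automatically has the same conformal vector as $V$.

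First I would record that each $e^i$ is a quasi-primary vector of $V$: indeed $L_1e^i=\nu_{(2)}e^i=\sum_j e^j_{(2)}e^i=e^i_{(2)}e^i$, the cross terms vanishing because the $e^j$ mutually commute, and $e^i_{(2)}e^i$ lies in the weight-one subspace of the copy of $L(1/2,0)$ generated by $e^i$, which is $\{0\}$. I would then settle two unitarity issues. On the one hand, $V$ is unitary: $T$ is regular (see below), $V$ is a finite direct sum of irreducible $T$-modules, every irreducible $L(1/2,0)$-module — hence every irreducible $T$-module — is unitary, and one then applies the results on unitarity of vertex operator algebra extensions from \cite{CGGH} (cf.\ also \cite{Gui21b}). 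On the other hand, the Virasoro frame can be chosen so that each $e^i$ is Hermitian; since a quasi-primary vector of $V_2$ is Hermitian precisely when it is fixed by the PCT operator $\theta$, this is the assertion that the frame may be selected $\theta$-compatibly. Granting this, $T$ is generated by a family of Hermitian quasi-primary vectors and hence is a unitary subalgebra of $V$ (cf.\ \cite[Section 5.4]{CKLW18}), with the same conformal vector.

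It then remains to verify the remaining hypotheses of Theorem \ref{regularextension_theorem2}. The subalgebra $T$ is a regular vertex operator algebra: $L(1/2,0)$ is rational and $C_2$-cofinite, hence regular by \cite[Theorem 4.5]{ABD04}, and a finite tensor product of regular vertex operator algebras is regular (rationality and $C_2$-cofiniteness being preserved under tensor products). Moreover $T_1=\{0\}$, because the weight-one subspace of $L(1/2,0)$ is trivial; consequently $T$ is generated by $T_1$ together with the family $\{e^1,\dots,e^n\}$ of Hermitian quasi-primary Virasoro vectors, and no $\mathcal{W}_3$ vectors are needed. Theorem \ref{regularextension_theorem2} then gives that $V$ is energy-bounded, which together with the unitarity above is exactly the statement.

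The parts involving $T$ alone — regularity of the Ising tensor power, vanishing of its weight-one subspace, quasi-primality of the frame vectors — are routine. The main obstacle is the unitary-structure bookkeeping of the second paragraph: proving that $V$ is unitary and, most of all, that the Virasoro frame can be realized inside $V$ as a $\theta$-fixed (equivalently, Hermitian) family, so that $T$ is a unitary subalgebra in the precise sense of \cite{CKLW18}. I expect to handle this with the unitarity-of-extension techniques of \cite{CGGH} together with the rigidity of Virasoro vectors of a given central charge.
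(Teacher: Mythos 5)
Your argument coincides with the paper's: the proof there is a one-line reduction to \cite[Corollary 4.11]{CGGH} (unitarity of $V$ together with the fact that the frame subalgebra is a unitary subalgebra), to \cite[Section 5]{Zhu96} and \cite[Theorem 4.5]{ABD04} (regularity of $L(1/2,0)^{\otimes n}$), and to Theorem \ref{regularextension_theorem2}, exactly as you propose. The one point you leave open --- that the frame vectors $e^i$ may be taken Hermitian --- is not discussed in the paper either, but it does close along the lines you indicate: \cite[Corollary 4.11]{CGGH} equips $V$ with a unitary structure for which $T\cong L(1/2,0)^{\otimes n}$ is a unitary subalgebra, and since any two normalized invariant scalar products on $L(1/2,0)^{\otimes n}$ differ by a vertex operator algebra automorphism (\cite[Proposition 5.19]{CKLW18}) while $\operatorname{Aut}\bigl(L(1/2,0)^{\otimes n}\bigr)\cong S_n$ merely permutes the $e^i$, each $e^i$ is Hermitian for the involution inherited from $V$. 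With that observation supplied, your proof is complete and is essentially the intended one.
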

 \begin{proof} As in the proof of Theorem \ref{theoaffineextension} the conclusion follows from \cite[Corollary 4.11]{CGGH}, 
\cite[Section 5]{Zhu96}, \cite[Theorem 4.5]{ABD04} and    Theorem \ref{regularextension_theorem2}.     
 \end{proof}
 
We conclude this paper by briefly discussing some further applications  of Theorem \ref{main_theorem} which come from the notion of strongly energy-bounded unitary vertex operator algebra  together with recent results by Gui \cite{Gui20}. 

We first recall, for the convenience of the reader, some basic facts about unitary vertex operator algebra modules which have been already partially used in the preceding part of this paper.

Let $V$ be a simple unitary vertex operator algebra with central charge $c$. 
 A {\it vertex algebra module} $M$ for $V$ (or simply a $V$-module) is a vector space 
together with a map  
$$ 
a \mapsto Y^M(a,z) = \sum_{n \in \ZZ} a^M_{(n)} z^{-n-1}   
$$
from $V$ into the family of fields on $M$ such that $Y^M(\Omega,z) = 1_M$ and the following Borcherds identity  for modules holds
\begin{eqnarray}
\label{B-id_module}
\nonumber
\sum_{j=0}^{+\infty}
\binom{m}{j}
\left(a_{(n+j)}b\right)^M_{(m+k-j)}c =
\sum_{j=0}^{+\infty}(-1)^j
\binom{n}{j}
a^M_{(m+n-j)}b^M_{(k+j)}c \\ -
\sum_{j=0}^{+\infty}(-1)^{j+n}
\binom{n}{j}
b^M_{(n+k-j)}a^M_{(m+j)}c \, ,\;\;\;\;
\;a,b\in V,\; c \in M,\;  \,m,n,k\in \ZZ \, .
\end{eqnarray}

A {\it $V$-submodule} $N$ of $M$ is a subspace $N$ of $M$ such that $a^M_{(n)}N \subset N$ for all $a \in V$ and all $n \in \ZZ$. We say that a $V$-module $M$ is {\it irreducible} or {\it simple} if its only $V$-submodules are  $\{0\}$ and $M$.

Similarly to the  case of vertex operator algebras, we write 
\begin{equation}
Y^M(\nu,z) = \sum_{n \in \ZZ} L^M_nz^{-n-2} 
\end{equation} 
and it turns out that the endomorphisms $L^M_n$, $n\in \ZZ$, satisfy the Virasoro algebra commutation relations on $M$ with central charge $c$, 

The endomorphisms $a^M_n \in \mathrm{End}(M)$, $a\in V$, $n\in \ZZ$, are defined by 
\begin{equation}
Y^M(z^{L_0}a , z) = \sum_{n \in \ZZ} a^M_{n}z^{-n}.
\end{equation} 

We say that the $V$-module $M$ is unitary if it is equipped with a scalar product $(\cdot |\cdot)_M$ such that 
\begin{equation} \label{invariant_scalar_product}
(b|a^M_n c) =  ((a^*)^M_{-n}b |c)
\end{equation}
for all $a \in V$, all $b,c \in M$ and all $n \in \ZZ$. In this case all eigenvalues of $L^M_0$ must be nonnegative real numbers. 

Similarly to the case of unitary vertex operator algebras we define the norm $\| \cdot \|_M$ on $M$ by 
$\|c \|_M := \sqrt{(c | c)_M}$, $c \in M$.
If $s \in \Zgeq$ and $a\in V$, we say that the field $Y^M(a,z)$ {\it satisfies $s$-th order energy bounds} if there exist real numbers $C\geq 0$ and $t \geq 0$ such that 
\begin{equation}
\|a^M_n c\|_M \leq  C (1+ |n|)^{t} \|(L^M_0 + 1_M)^s c\|_M 
\end{equation}
for all $c \in M$ and all $n \in \ZZ$. We say that $Y^M(a,z)$ {\it satisfies energy bounds} if it satisfies 
$s$-th order energy bounds for some $s\in \Zgeq$ and we say that $M$ is an {\it energy-bounded module} for $V$ 
if $Y^M(a,z)$ satisfies energy bounds for all $a \in V$.

If $U$ is a unitary subalgebra of a simple unitary vertex operator algebra $V$, then $V$ is in particular a unitary $U$-module. Moreover, $U$ is an energy-bounded subalgebra of $V$ if and only if $V$ is an energy-bounded $U$-module, cf.\  Remark \ref{subalgebra_energy-bounded_remark}.

\begin{defin} (c.f.\ \cite[Section 2.1]{Gui20}) We say that a simple unitary vertex operator algebra $V$ is {\it strongly energy-bounded} if every irreducible unitary $V$-module is energy-bounded. 
\end{defin}

Various interesting examples of strongly energy-bounded simple unitary vertex operator algebras come from regular cosets of affine unitary vertex operator algebras,  see \cite[Section 2.6]{Gui20}. These examples include the discrete 
series $\mathcal{W}$-algebras of ADE type \cite[Section 2.7]{Gui20}. 

\begin{prop}
\label{stronlyEB_regular}
Let $V$ be a simple unitary vertex operator algebra and let $U$ be a unitary subalgebra of $V$. Assume that 
$U$ is regular and  strongly energy-bounded. Then $U$ is an energy-bounded subalgebra of $V$.
\end{prop}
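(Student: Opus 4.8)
The plan is to argue directly on the $U$-module structure of $V$. Because $U$ need not have the same conformal vector as $V$, the decomposition of $V$ into irreducible $U$-modules may be infinite and Theorem~\ref{main_theorem} does not apply; however, rationality of $U$ guarantees that the summands fall into only finitely many isomorphism classes, which is exactly what is needed for the energy-bound constants to stay uniform. By Remark~\ref{subalgebra_energy-bounded_remark} it suffices to prove that $V$ is an energy-bounded unitary $U$-module.

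First I would decompose $V$ as a $U$-module. As a unitary subalgebra of the simple unitary vertex operator algebra $V$, the algebra $U$ makes $V$ into a unitary $U$-module (in particular a weak $U$-module). Since $U$ is regular, every weak $U$-module is a direct sum of irreducible ordinary $U$-modules, so $V=\bigoplus_{\alpha\in I}M^\alpha$ with each $M^\alpha$ an irreducible ordinary $U$-module. Using that the scalar product on $V$ is positive definite and that the orthogonal complement of a $U$-submodule is again a $U$-submodule (by invariance of the form), I may take the decomposition to be orthogonal, so that each $M^\alpha$, equipped with the restricted scalar product, is an irreducible unitary $U$-module. Since $U$ is rational, the $M^\alpha$ belong to finitely many isomorphism classes, say those of the irreducible unitary $U$-modules $W_1,\dots,W_r$.

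Next, fix $a\in U$. As $U$ is strongly energy-bounded, each $W_j$ is an energy-bounded $U$-module, so $Y^{W_j}(a,z)$ satisfies $s_j$-th order energy bounds with some constants $C_j,t_j\ge 0$. Setting $s:=\max_j s_j$, $t:=\max_j t_j$, $C:=\max_j C_j$ and using that $L^U_0$ has nonnegative eigenvalues and restricts to $L^{M^\alpha}_0$ on each $M^\alpha$, one obtains the \emph{same} estimate $\|a_n c\|\le C(1+|n|)^t\|(L^U_0+1_V)^s c\|$ for every $c\in M^\alpha$ and every $\alpha$. Summing the squared norms over the orthogonal decomposition, for any $c=\sum_\alpha c_\alpha\in V$ (a finite sum) this gives
\[
\|a_n c\|^2=\sum_\alpha\|a_n c_\alpha\|^2\le C^2(1+|n|)^{2t}\sum_\alpha\|(L^U_0+1_V)^s c_\alpha\|^2=C^2(1+|n|)^{2t}\|(L^U_0+1_V)^s c\|^2 .
\]
Hence $Y^V(a,z)$ satisfies $s$-th order energy bounds on the $U$-module $V$, and since $a\in U$ was arbitrary, $V$ is an energy-bounded unitary $U$-module; Remark~\ref{subalgebra_energy-bounded_remark} then gives that $U$ is an energy-bounded subalgebra of $V$.

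The part that requires care is structural rather than analytic: establishing that $V$ is a direct sum of irreducible \emph{unitary} $U$-modules with only finitely many isomorphism types. Complete reducibility as a weak module is precisely regularity, finiteness of the set of types is rationality, and the passage to an orthogonal (hence unitary) decomposition uses only invariance and positive-definiteness of the scalar product. Once these are in place, a possibly infinite index set $I$ is harmless, because the energy-bound constants depend on $a$ and on the isomorphism class of the summand alone, and there are finitely many such classes.
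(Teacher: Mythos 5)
Your proposal is correct and follows essentially the same route as the paper: decompose $V$ orthogonally into irreducible unitary $U$-submodules, use regularity to get only finitely many isomorphism types, and deduce uniform energy-bound constants (both arguments tacitly use that isomorphic irreducible unitary modules are unitarily isomorphic up to a scalar, so the constants transfer). The only difference is the final analytic step: you bound all modes $a_n$ directly from the module energy bounds and sum over the orthogonal decomposition, whereas the paper extracts only the zero-mode bound $\|a_0 b\|\leq C\|(L_0^U+1_V)^s b\|$ and then recovers the general modes via Lemma \ref{productLemma}; both are valid.
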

\begin{proof}
The simple unitary vertex operator algebra $V$ is an orthogonal, possibly infinite, direct sum of irreducible $U$-submodules. On the other hand, $U$ being regular, there are only finitely many inequivalent irreducible submodules appearing in the direct sum. It follows that for every $a \in U$ there are real numbers $s \geq 0$ and $C>0$ such that 
$$\|a_0b\| \leq \|(L^U_0 + 1_V)^sb\|  \leq \|(L_0 + 1_V)^s b\|$$ 
for all $b \in V$ and the conclusion follows, e.g. , from Lemma \ref{productLemma}.
  \end{proof}
  
 As a consequence, we have the following generalization of Theorem \ref{regularextension_theorem2}.  
  
 \begin{theo} 
 Let $V$ be a simple unitary vertex operator algebra and let $U$ be a unitary subalgebra of $V$ with the same conformal vector which is also a regular vertex operator algebra. Assume that  $U$ is generated by $U_1$, a family of Hermitian quasi-primary Virasoro vector a family of Hermitian  $\mathcal{W}_3$ vectors and a family of strongly energy-bounded regular unitary subalgebras. Then $V$ is  energy-bounded.
\end{theo}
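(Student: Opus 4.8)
The plan is to reduce to Theorem \ref{regularextension_theorem} by establishing that $U$ is an \emph{energy-bounded} unitary subalgebra of $V$; once this is done, the hypotheses that $U$ is regular and has the same conformal vector as $V$ give the conclusion at once.

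First I would check that every vector in the given generating family of $U$ satisfies energy bounds. The vectors in $U_1$ satisfy $\tfrac12$-th order (hence linear) energy bounds by Proposition \ref{boundV1Prop}; the Hermitian quasi-primary Virasoro vectors satisfy linear energy bounds by Proposition \ref{boundVirProp}; the Hermitian $\mathcal{W}_3$ vectors satisfy third order energy bounds by Proposition \ref{boundW3_proposition}. For each strongly energy-bounded regular unitary subalgebra $W$ occurring in the generating family, Proposition \ref{stronlyEB_regular} applied to the pair $W \subset V$ shows that $W$ is an energy-bounded unitary subalgebra of $V$, so every homogeneous vector of $W$ satisfies energy bounds as an operator on $V$. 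Consequently $U$ is generated by a family of homogeneous vectors satisfying energy bounds.

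Next I would invoke Proposition \ref{bound_generated_prop} to conclude that $U$ is an energy-bounded unitary subalgebra of $V$. Finally, since $U$ contains the conformal vector of $V$ and is a regular vertex operator algebra, Theorem \ref{regularextension_theorem} applies directly and gives that $V$ is energy-bounded. (Equivalently, one can unwind this last step: regularity of $U$ makes $V$ a finite direct sum of simple $U$-modules, and $V$ is $C_2$-cofinite by \cite[Proposition 5.2]{ABD04}, hence exponentially energy-bounded by Proposition \ref{proposition_exponentially_bounded}, so Theorem \ref{main_theorem} applies.)

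The argument is essentially bookkeeping assembled from the cited results, so I do not expect a serious obstacle; the only point that needs a little care is verifying that Proposition \ref{stronlyEB_regular} genuinely applies to each subalgebra of the generating family — i.e.\ that ``strongly energy-bounded regular unitary subalgebra of $V$'' is exactly its hypothesis, which it is — so that one really gets energy bounds for these vectors acting on all of $V$ and not merely on the subalgebra. No new difficulty arises beyond those already handled in Theorems \ref{regularextension_theorem} and \ref{regularextension_theorem2}.
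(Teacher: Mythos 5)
Your proposal is correct and follows essentially the route the paper intends: the paper states this theorem as a direct consequence of Proposition \ref{stronlyEB_regular} combined with Propositions \ref{boundVirProp}, \ref{boundV1Prop}, \ref{boundW3_proposition}, Proposition \ref{bound_generated_prop} and Theorem \ref{regularextension_theorem}, which is exactly the chain of reductions you carry out. Your explicit check that Proposition \ref{stronlyEB_regular} applies to each strongly energy-bounded regular unitary subalgebra in the generating family, so that its homogeneous vectors satisfy energy bounds as operators on all of $V$, is precisely the one point that needed attention.
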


 \end{section}    

\bigskip
\noindent
{\small
{\bf Acknowledgements.}
We would like to thank Tiziano Gaudio, Bin Gui, Luca Giorgetti, Robin Hillier, Yoh Tanimoto and Mih\'aly Weiner for useful and stimulating discussions. This work was supported in part by the ERC Advanced Grant 669240 QUEST ``Quantum Algebraic Structures and Models''. S.C.\,  is partially supported by  the MIUR Excellence Department Project MatMod@TOV awarded to the Department of Mathematics, University of Rome ``Tor Vergata'', CUP E83C23000330006, by  the University of Rome ``Tor Vergata'' funding \emph{OAQM}, CUP E83C22001800005 and by GNAMPA-INDAM.

\bigskip
\noindent
{\small
\emph{Data sharing is not applicable to this article as no new data were created or analyzed in this study.} }

\bigskip

\end{document}